\newtheorem{defeng}{Definition}[section]
\newtheorem{theorem}[defeng]{Theorem}
\newtheorem{lemma}[defeng]{Lemma}
\newtheorem{conjecture}[defeng]{Conjecture}
\newtheorem{corollary}[defeng]{Corollary}
{\theorembodyfont{\rmfamily} }
{\theorembodyfont{\rmfamily} }
{\theorembodyfont{\rmfamily} }
{\theoremstyle{break}\theorembodyfont{\rmfamily} }
{\theoremstyle{break}\theorembodyfont{\rmfamily} }
\newcounter{claim}
\newenvironment{proof}[1][]%
 {\noindent {\setcounter{claim}{0}\sc proof ---
   }{#1}{}}{\hfill$\Box$\vspace{2ex}} 
\newenvironment{claim}[1][]%
{\refstepcounter{claim}\vspace{1ex}\noindent{(\it\arabic{claim}){#1}{}}\it}{\vspace{1ex}}
\newenvironment{proofclaim}[1][]%
	{\noindent {}{#1}{}}{ This proves~(\arabic{claim}).\vspace{1ex}}
\newcommand{\sm}{\setminus} 
\title{On triangle-free graphs that do not contain a subdivision of
  the complete graph on four vertices as an induced subgraph}
\author{Nicolas Trotignon\thanks{CNRS, LIP, ENS
    de Lyon, INRIA, Universit\'e de Lyon.
    Partially supported by ANR project Stint under reference ANR-13-BS02-0007}~ and Kristina Vu\v{s}kovi\'c\thanks{School of
    Computing, University of Leeds, Leeds LS2 9JT, UK; and Faculty of Computer Science
    (RAF), Union University, Knez Mihajlova 6/VI, 11000 Belgrade, Serbia.  Partially supported by
    EPSRC grant EP/K016423/1 and Serbian Ministry of Education and
    Science projects 174033 and III44006.}}
\begin{document}

\maketitle

\begin{abstract}
  We prove a decomposition theorem for the class of triangle-free
  graphs that do not contain a subdivision of the complete graph on
  four vertices as an induced subgraph.  We prove that every graph
   of girth at least~5 in this class is 3-colorable.

{\bf\noindent AMS Classification: } 05C75
\end{abstract}

\section{Introduction}

Here graphs are simple and finite.  We say that $G$ \emph{contains}
$H$ when $H$ is isomorphic to an induced subgraph of $G$. We say that
a graph $G$ is {\em $F$-free} if $G$ does not contain $F$. For a
family of graphs ${\cal F}$, we say that $G$ is {\em ${\cal F}$-free}
if for every $F\in {\cal F}$, $G$ does not contain $F$.
\emph{Subdividing} an edge $e=vw$ of a graph $G$ means deleting $e$,
and adding a new vertex $u$ of degree~2 adjacent to $v$ and $w$.  A
\emph{subdivision} of a graph $G$ is any graph $H$ obtained from $G$
by repeatedly subdividing edges.  Note that $G$ is a subdivision of
$G$.  We say that $H$ is an \emph{ISK4 of a graph $G$} when $H$ is an
induced subgraph of $G$ and $H$ is a subdivision of $K_4$ (where $K_4$
denotes the complete graph on four vertices).  ISK4 stands for
``Induced Subdivision of $K_4$''.

In~\cite{nicolas:isk4}, a decomposition theorem for ISK4-free graphs
is given (see Theorem~\ref{th:decISK4}) and it is proved that their
chromatic number is bounded by a constant $c$.  The proof
in~\cite{nicolas:isk4} follows from a theorem by K\"uhn and
Osthus~\cite{kuhnOsthus:04}, from which it follows that $c$ is at
least $2^{512}$.  It is conjectured in~\cite{nicolas:isk4} that every
ISK4-free is 4-colorable.  The goal of this paper is to prove a
stronger decomposition theorem for ISK4-free graphs, under the
additional assumption that they are triangle-free (see
Theorems~\ref{decomp} and~\ref{maindecomp}).  We also propose the
following conjectures, prove that the first one implies the second one
(see~Theorem~\ref{th:imply}), and prove both of them for graphs of
girth at least~5 (see Theorem~\ref{th:color}).  A complete bipartite
graph with partitions of size $|V_1|=m$ and $|V_2|=n$, is denoted
by $K_{m,n}$.

\begin{conjecture}
  \label{conj1}
  Every \{triangle, ISK4, $K_{3, 3}$\}-free graph contains a vertex of
  degree at most~2. 
\end{conjecture}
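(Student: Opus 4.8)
The plan is to argue by contradiction: suppose $G$ is a \{triangle, ISK4, $K_{3,3}$\}-free graph in which every vertex has degree at least~3, and choose such a $G$ with $|V(G)|$ as small as possible. The goal is to show that $G$ must in fact contain a subdivision of $K_4$ as an \emph{induced} subgraph, contradicting the hypothesis. The starting point is the classical fact that every graph of minimum degree at least~3 contains a (not necessarily induced) subdivision of $K_4$: indeed, graphs with no topological $K_4$ are exactly those whose blocks are series-parallel, and such graphs always have a vertex of degree at most~2. So minimum degree~3 hands us a subdivision of $K_4$ for free; the whole difficulty is to make it induced.

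First I would select, among all subgraphs of $G$ that are subdivisions of $K_4$, one $H$ that is minimal, say minimising the number of its chords and then its number of vertices. Since $G$ is ISK4-free, $H$ is not induced, so it has a chord. The heart of this route is a case analysis on where a chord can lie: writing $a,b,c,d$ for the four branch vertices and $P_{ab},\dots,P_{cd}$ for the six branch paths, triangle-freeness rules out the short chords, and one tries to show that any remaining chord either permits a rerouting of the paths yielding a strictly smaller subdivision of $K_4$ (contradicting the choice of $H$), or, together with two branch paths and its endpoints, exhibits an induced $K_{3,3}$ or another ISK4. The $K_{3,3}$-free hypothesis is exactly what should close the last case.

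More in the spirit of the present paper, the alternative is to invoke the decomposition theorem (Theorems~\ref{decomp} and~\ref{maindecomp}) and induct on $|V(G)|$. The $K_{3,3}$-free assumption is valuable here: it eliminates the dense basic graphs (the complete bipartite graphs with both sides of size at least~3 all contain $K_{3,3}$), so that the surviving basic graphs are sparse and can be checked directly to have a vertex of degree at most~2. It then remains to treat the graphs that admit one of the cutsets or $2$-joins provided by the decomposition: one would apply the induction hypothesis to the pieces of the decomposition and lift a low-degree vertex back to $G$.

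I expect this last point to be the main obstacle, and the reason the statement is only a conjecture. Minimum degree is not preserved under the natural decompositions: reassembling the pieces across a clique cutset, a star cutset, or a $2$-join can raise the degree of precisely the cutset vertices, which are the vertices whose degree one would most like to control. Making the induction go through therefore seems to require a strengthened inductive statement that tracks the degrees of the cutset vertices — for instance forbidding minimum degree~3 only outside a prescribed small set, or weighting the marker vertices introduced by the decomposition — and finding the right such strengthening is the crux of the problem. Symmetrically, in the subdivision-cleaning route the genuine obstacle is to prove that the cleaning \emph{always} succeeds, i.e.\ that a non-induced subdivision of $K_4$ can always be repaired using triangle-freeness and the absence of $K_{3,3}$; a failure of this repair is exactly what a counterexample to the conjecture would look like.
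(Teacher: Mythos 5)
There is no paper proof to compare against here: the statement you were given is Conjecture~\ref{conj1}, which the authors explicitly leave open. The paper proves it only under the extra hypothesis of girth at least~5 (Theorem~\ref{th:color}), where $K_{3,3}$-freeness comes for free since $K_{3,3}$ contains a $4$-cycle. Your proposal, as you yourself concede, is not a proof either: both routes stop exactly at the open difficulty. In your first route, the assertion that a non-induced subdivision of $K_4$ in a minimum-degree-$3$ graph can always be ``repaired'' into an induced one, or else replaced by a triangle or an induced $K_{3,3}$, is not a lemma you can invoke nor a case analysis you have carried out --- it is a restatement of the conjecture itself (minimum degree at least~$3$ implies one of the three forbidden configurations). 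In your second route, the lifting of a degree-$2$ vertex across a cutset is likewise left unresolved; note also that for this class the relevant decomposition (Theorem~\ref{maindecomp2}) involves no $2$-joins or proper $2$-cutsets at all: a \{triangle, ISK4, $K_{3,3}$\}-free graph is series-parallel or has a wheel decomposition, i.e.\ a star cutset, so the star cutset is the only case to handle.

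Your diagnosis of the crux is nevertheless exactly right, and it matches what the paper does in the girth-$\geq 5$ case. There the inductive statement is strengthened to the $(x,y)$-property: for any edge $xy$ (or single vertex $x=y$), a degree-$2$ vertex must exist \emph{outside} $N[x]\cup N[y]$. This is precisely the ``tracking of cutset vertices'' you call for: given a minimal star cutset $C$ centered at $c$, the induction is applied to a block $G[C\cup Y]$ with the distinguished pair chosen from $\{x,y,c\}\sm X$, so that the degree-$2$ vertex produced avoids the closed neighborhood of the center and hence has the same degree in $G$ as in the block (Lemma~\ref{l:2-conn}); the base case is a structural analysis of series-parallel bad triples (Lemma~\ref{l:desc}, Lemma~\ref{xy}). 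The reason this does not settle the conjecture --- and the reason your plan cannot be completed along these lines --- is that this particular strengthening is false once $4$-cycles are allowed: the graphs of Figure~\ref{fig} are triangle-free, series-parallel, and fail the $x$-property. Finding a replacement invariant that survives $4$-cycles is the actual open problem; your proposal correctly locates it but does not solve it.
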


\begin{conjecture}
  \label{conj2} 
  Every \{triangle, ISK4\}-free graph is 3-colorable. 
\end{conjecture}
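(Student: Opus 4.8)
The plan is to deduce Conjecture~\ref{conj2} from Conjecture~\ref{conj1} together with the decomposition theorem, since $3$-colorability seems out of reach without structural control; what one can hope to prove unconditionally is only the restriction to girth at least~$5$. The one feature separating the two conjectures is the graph $K_{3,3}$, which is itself \{triangle, ISK4\}-free but is forbidden in Conjecture~\ref{conj1}. So the first step is the easy observation that Conjecture~\ref{conj1} makes every \{triangle, ISK4, $K_{3,3}$\}-free graph $2$-degenerate: repeatedly delete a vertex of degree at most~$2$, color the remainder by induction, and extend the coloring by giving the deleted vertex one of the three colors avoided by its at most two neighbors. Hence such graphs are $3$-colorable, and the task reduces to (a) proving Conjecture~\ref{conj1}, and (b) lifting $3$-colorability from the $K_{3,3}$-free subclass to the whole class.

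For step~(b) I would argue by induction on $|V(G)|$ using Theorem~\ref{maindecomp}: if $G$ is $K_{3,3}$-free or basic we are done, and otherwise $G$ has one of the cutsets provided by the decomposition, along which we split $G$, $3$-color the (smaller, still \{triangle, ISK4\}-free) blocks by induction, and glue. Triangle-freeness makes the clique cutsets trivial, since a clique cutset is then a vertex or an edge and one need only permute the three colors in each block so that the shared vertices agree; no monochromatic edge can appear between blocks because the cutset separates them. The delicate case is the cutset attached to a copy of $K_{3,3}$, where I would need each block to admit $3$-colorings realizing all of the few possible color patterns on the attachment set, and then check that compatible patterns can be selected; because three colors leave almost no freedom in a triangle-free graph, verifying this list-coloring-type flexibility of the blocks is where the gluing will be hard.

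The main obstacle is step~(a), Conjecture~\ref{conj1} itself, which I would also attack through the decomposition machinery: take a minimal counterexample $G$ of minimum degree at least~$3$, find one of the decompositions of Theorem~\ref{decomp} specialized to the $K_{3,3}$-free case, and derive from either a cutset or a basic graph a vertex of degree at most~$2$, contradicting minimality. Showing that every basic graph of the decomposition has such a vertex, and that a nontrivial cutset can always be chosen so that the smaller side exposes a vertex of degree at most~$2$, is the crux---and precisely the part that stays open in general. The one case I expect to close completely is girth at least~$5$: since $K_{3,3}$ contains a $4$-cycle, every graph of girth at least~$5$ is automatically $K_{3,3}$-free, so proving Conjecture~\ref{conj1} for girth at least~$5$ gives, via the degeneracy argument of the first paragraph, the girth-$5$ instance of Conjecture~\ref{conj2} recorded in Theorem~\ref{th:color}.
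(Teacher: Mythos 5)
The statement you are reviewing is a conjecture: the paper does not prove it either. What the paper actually establishes is (i) Theorem~\ref{th:imply}, that Conjecture~\ref{conj1} implies Conjecture~\ref{conj2}, and (ii) Theorem~\ref{th:color}, the unconditional girth-at-least-5 case. Your overall architecture --- reduce to Conjecture~\ref{conj1} via degeneracy of the $K_{3,3}$-free subclass (the class is hereditary, so a degree-2 vertex in every member gives 2-degeneracy, hence 3-colorability), handle graphs containing $K_{3,3}$ separately, and observe that only the girth-5 instance closes unconditionally --- is exactly the paper's architecture, and your first and last paragraphs are correct renderings of the paper's reasoning.

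The genuine gap is in your step~(b). You propose to run the induction on Theorem~\ref{maindecomp} and glue 3-colorings along ``one of the cutsets provided by the decomposition.'' But one of the outcomes of Theorem~\ref{maindecomp} is a wheel decomposition, i.e.\ a star cutset, and 3-colorings of the blocks of a star cutset cannot in general be glued: a star cutset is not a clique, it can be arbitrarily large, and the restrictions of the two block-colorings to the cutset need not agree up to any permutation of colors. The ``list-coloring-type flexibility'' you hope to verify is not a postponed technicality; it is precisely the kind of statement that fails for star cutsets, which is why star cutsets are useful for structural theorems but not for direct coloring inductions. Moreover, Theorem~\ref{maindecomp} does not let you choose which outcome occurs, so your induction genuinely gets stuck when only the wheel decomposition is returned. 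The missing idea, which makes your ``delicate case'' vanish entirely, is Lemma~\ref{l:decK33T} (already in the paper, from~\cite{nicolas:isk4}): a \{triangle, ISK4\}-free graph that \emph{contains} $K_{3,3}$ is either complete bipartite or has a clique cutset of size at most~2. With that lemma, the paper's proof of Theorem~\ref{th:imply} has only two cases: if $G$ contains $K_{3,3}$, it is bipartite or splits along a clique cutset of size at most~2 (the trivial gluing you describe); if $G$ is $K_{3,3}$-free, Conjecture~\ref{conj1} supplies a degree-2 vertex. No star cutset, and no ``cutset attached to a copy of $K_{3,3}$'' (no such object appears in any of the decomposition theorems), ever needs to be handled.
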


In Section~\ref{sec:dec}, we state several known decomposition
theorems, and derive easy consequences of them for our class. In
particular we prove that Conjecture~\ref{conj1} implies
Conjecture~\ref{conj2}.  In Section~\ref{sec:proofDec}, we prove our
main decomposition theorem.  In Section~\ref{sec:chordless}, we give
some properties needed later for the class of chordless graphs (graphs
where all cycles are chordless).  In Section~\ref{sec:deg2}, we prove
Conjecture~\ref{conj1} for graphs of girth at least~5.

\section{Decomposition theorems}
\label{sec:dec}

In this section, we provide the notation needed to state 
decomposition theorems for ISK4-free graphs, and we state them.

A graph $G$ is \emph{series-parallel} if no \emph{subgraph} of $G$ is
a subdivision of $K_4$.  Clearly, every series-parallel graph is
ISK4-free.  When $R$ is a graph, the \emph{line graph} of $R$ is the
graph $G$ whose vertex-set is $E(R)$ and such that two vertices of $G$
are adjacent whenever the corresponding edges are adjacent in~$R$. 

For a graph $G$, when $C$ is a subset of $V(G)$, we write $G\sm C$
instead of $G[V(G) \sm C]$.  A \emph{cutset} in a graph $G$ is a set
$C$ of vertices such that $G \sm C$ is disconnected.  A \emph{star
  cutset} is a cutset $C$ that contains a vertex $c$, called a
\emph{center} of $C$, adjacent to all other vertices of $C$.  Note
that a star cutset may have more than one center, and that a cutset of
size~1 is a star cutset.  A \emph{double star cutset} is a cutset $C$
that contains two adjacent vertices $x$ and $y$, such that every
vertex of $C \sm \{x, y\}$ is adjacent to $x$ or $y$.  Note that a
star cutset of size at least~2 is a double star cutset.

A path from a vertex $a$ to a vertex $b$ is refered to as an
\emph{$ab$-path}.  A \emph {proper 2-cutset} of a connected graph
$G=(V,E)$ is a pair of non-adjacent vertices $a, b$ such that $V$ can
be partitioned into non-empty sets $X$, $Y$ and $\{ a,b \}$ so that:
there is no edge between $X$ and $Y$; and both $G[X \cup \{ a,b \}]$
and $G[Y \cup \{ a,b \}]$ contain an $ab$-path and neither of $G[X
\cup \{a,b \}]$ nor $G[Y \cup \{ a,b \}]$ is a chordless path.  We say
that $(X, Y, a, b)$ is a \emph{split} of this proper 2-cutset.  The
following is the main decomposition theorem for ISK4-free graphs.

\begin{theorem}[see \cite{nicolas:isk4}]
  \label{th:decISK4}
  If $G$ is an ISK4-free graph, then $G$ is series-parallel, or $G$ is
  the line graph of a graph of maximum degree at most 3, or $G$ has a
  proper 2-cutset, a star cutset, or a double star cutset.
\end{theorem}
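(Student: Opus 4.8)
The plan is to prove the dichotomy directly: assume $G$ is ISK4-free and that $G$ is neither series-parallel nor the line graph of a graph of maximum degree at most~3, and then produce one of the three cutsets. First I would dispose of the easy reductions: if $G$ is disconnected or has a cutvertex, a cutset of size~$1$ (hence a star cutset) is immediate, so I may assume $G$ is $2$-connected. Since $G$ is not series-parallel, by definition some subgraph of $G$ is a subdivision of $K_4$; because $G$ is ISK4-free, no such subdivision is induced, so every subdivision of $K_4$ in $G$ must carry a chord or an extra adjacency. The first real step is to extract from this a \emph{well-structured} induced subgraph that already fails to be series-parallel: I would argue that $G$ must contain, as an induced subgraph, either a \emph{prism} (two vertex-disjoint triangles joined by three internally disjoint paths) or a \emph{wheel} (a hole $C$ together with a vertex $v$ having at least three neighbors on $C$). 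These are, up to the line-graph case, the only $3$-connected configurations that are non-series-parallel yet contain no induced subdivision of $K_4$.

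The heart of the proof is the analysis of wheels. Given a wheel $(C,v)$, choose three neighbors $x_1,x_2,x_3$ of $v$ on $C$; the three edges $vx_i$ together with the three arcs of $C$ they cut off form a subdivision of $K_4$ with branch vertices $v,x_1,x_2,x_3$. Since this cannot be induced, the neighbors of $v$ on $C$ cannot be ``spread out'': they must cluster into consecutive blocks, and the arcs between far-apart neighbors must be blocked by additional edges from $v$. I would turn this into a rigidity statement showing that, unless the local picture around $v$ is exactly the line-graph pattern, the set $N[v]$ together with a bounded amount of extra structure separates $C$, yielding a star cutset or a double star cutset anchored at (or on an edge incident to) $v$. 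Running this over a carefully chosen extremal wheel --- for instance one maximizing the hole $C$, or minimizing the number of neighbors of $v$ on $C$ --- is what forces the separation rather than an endless regress.

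For the prism case I would study how the rest of $G$ attaches to a fixed induced prism with triangles $a_1a_2a_3$, $b_1b_2b_3$ and connecting paths $P_1,P_2,P_3$. Using ISK4-freeness to forbid attachments that would complete an induced subdivision of $K_4$, I would show that either the attachments extend the prism into a larger line graph of a graph of maximum degree at most~3 (pushing $G$ toward the basic line-graph conclusion), or the three paths $P_1,P_2,P_3$ localize the interaction enough that the two triangles, or an interior pair of non-adjacent vertices on one path, form a proper $2$-cutset or a double star cutset. Assembling the cases, in the absence of the global line-graph structure the local analysis of the chosen wheel or prism always exposes a cutset of one of the three permitted types.

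The step I expect to be the main obstacle is the wheel analysis together with its bookkeeping. One must enumerate all ways in which external vertices and paths can attach to the chosen wheel (or prism) without creating an induced subdivision of $K_4$, and prove that each surviving configuration either enlarges a recognizable line-graph pattern or concentrates the attachments onto a (double) star or a non-adjacent pair. Controlling this case explosion --- and in particular ruling out configurations in which the neighbors of $v$ on $C$ are simultaneously too spread to form a clique and too dense to give a clean separator --- is where the genuine difficulty lies; the series-parallel and line-graph outcomes are precisely the escape routes that must be carved out so that every remaining case terminates in one of the three cutsets.
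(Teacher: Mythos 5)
There is no proof of Theorem~\ref{th:decISK4} in this paper to compare you against: the theorem is quoted from L\'ev\^eque, Maffray and Trotignon \cite{nicolas:isk4}, where its proof is a long and delicate structural analysis. Judged on its own merits, your text is a plan rather than a proof. The two steps that carry all the weight --- the ``rigidity statement'' that an extremal wheel forces a star or double star cutset, and the analysis of how the rest of the graph attaches to a prism (yielding the line-graph outcome or a proper 2-cutset) --- are precisely the content of that long argument, and you leave them as intentions (``I would argue'', ``I would show''), explicitly flagging the wheel bookkeeping as the expected obstacle. Nothing in your proposal discharges these steps, so none of the theorem's outcomes is actually established.

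There is also a concrete false step at the start. You claim that a non-series-parallel ISK4-free graph that is not a line graph of a graph of maximum degree at most~3 must contain an induced prism or wheel, these being ``the only $3$-connected configurations'' of this kind. This is wrong: $K_{3,3}$ is $3$-connected, ISK4-free, not series-parallel (it contains a subdivision of $K_4$ as a subgraph), and not a line graph (it contains claws), yet it contains neither a prism (it is triangle-free) nor a wheel (its only holes are $4$-holes, and no vertex has three neighbors on a $4$-hole). This is exactly why the extraction lemma behind this theorem (Lemma~\ref{l:begin} of the present paper, also from \cite{nicolas:isk4}) has a fourth outcome, ``$G$ contains $K_{3,3}$,'' and why graphs containing $K_{3,3}$ receive a separate decomposition (Lemma~\ref{l:decK33}: complete bipartite, complete tripartite, or a clique cutset of size at most~3), whose outcomes must then be reconciled with the cutsets in the statement. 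Your sketch has no route for this case, so even as a plan it is incomplete.
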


Our first goal is to improve this theorem for triangle-free graphs.
Some improvements are easy to obtain (they trivially follow from the
absence of triangles).  The non-trivial one is done in the next two
sections: we show that double star cutsets and proper 2-cutsets are in
fact not needed.  We state the result now, but it follows from
Theorem~\ref{maindecomp} that needs more terminology and is slightly
stronger.

\begin{theorem}\label{decomp}
If $G$ is a \{triangle, ISK4\}-free graph, then either $G$ is 
a series-parallel graph or a complete bipartite graph, or $G$ has a
clique cutset of size at most two, or $G$ has a star cutset. 
\end{theorem}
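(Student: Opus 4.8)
The plan is to read off Theorem~\ref{decomp} from the general structure theorem for ISK4-free graphs, Theorem~\ref{th:decISK4}, using the absence of triangles to simplify or eliminate each of its five possible outcomes. Two outcomes require nothing: if $G$ is series-parallel, or if $G$ has a star cutset, then $G$ already satisfies one of the conclusions of Theorem~\ref{decomp}. The three outcomes left to treat are: $G$ is the line graph of a graph of maximum degree at most~$3$; $G$ has a proper $2$-cutset; and $G$ has a double star cutset.

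The line-graph outcome is disposed of directly. Suppose $G=L(R)$ with $\Delta(R)\le 3$. If $R$ had a vertex of degree~$3$, its three incident edges would be pairwise adjacent and hence form a triangle in $G$; likewise a triangle of $R$ would produce a triangle in $G$. As $G$ is triangle-free, $R$ has maximum degree at most~$2$ and no triangle, so $R$ is a disjoint union of paths and of cycles of length at least~$4$. Since $L$ sends a path to a shorter path and a cycle of length $n$ to a cycle of length $n$, the graph $G$ is itself a disjoint union of paths and cycles of length at least~$4$; in particular $\Delta(G)\le 2$, so no subgraph of $G$ is a subdivision of $K_4$ (which has vertices of degree~$3$). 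Hence $G$ is series-parallel and this case is done.

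The substance of the argument is the elimination of proper $2$-cutsets and double star cutsets: one must show that in a \{triangle, ISK4\}-free graph each of them can be replaced by a star cutset or a clique cutset of size at most~$2$. I would argue by taking a vertex-minimal counterexample $G$; then $G$ is \{triangle, ISK4\}-free with no star cutset and no clique cutset of size at most~$2$, is not series-parallel, and by the reductions above must carry a proper $2$-cutset or a double star cutset. Fix such a cutset $C$, chosen as small as possible, together with two components $D_1,D_2$ of $G\sm C$; a routine minimality argument ensures that the vertices of $C$ relevant to the two centers (respectively to the two ends of the proper $2$-cutset) have neighbours in both $D_1$ and $D_2$. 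Triangle-freeness then pins down the local picture: for a double star cutset with adjacent centers $x,y$ one has $N(x)\cap N(y)=\emptyset$ with both $N(x)$ and $N(y)$ independent, so $C$ decomposes into $x$, $y$, a set of neighbours of $x$, and a set of neighbours of $y$; for a proper $2$-cutset $\{a,b\}$ the two ends are non-adjacent and each side carries an induced $ab$-path that is not a single chordless path.

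The main obstacle is the final step, where the two components are combined. Using the $ab$-paths on the two sides (respectively induced paths joining the chosen neighbours of $x$ and $y$ inside $D_1$ and $D_2$), one wants to exhibit four branch vertices linked by internally disjoint induced paths, i.e.\ an induced subdivision of $K_4$, contradicting ISK4-freeness; conversely, forbidding every such subdivision should force the neighbourhoods across $C$ to be so constrained that $C$ collapses to a star cutset or to an edge (a clique cutset of size~$2$). Carrying out this case analysis cleanly is exactly what requires the extra terminology of Section~\ref{sec:proofDec}, where the stronger Theorem~\ref{maindecomp} is proved and from which Theorem~\ref{decomp} follows at once. Finally, note that the complete bipartite class in the statement is never actually forced on us here: every complete bipartite graph with a side of size at least three has a star cutset (delete a vertex of such a side together with its neighbourhood, leaving at least two independent vertices), and the remaining complete bipartite graphs are series-parallel. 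It is retained because it is the basic class preserved under decomposition in Theorem~\ref{maindecomp}, and because complete bipartite graphs are indeed \{triangle, ISK4\}-free, as every induced subgraph of $K_{m,n}$ is complete bipartite and no complete bipartite graph is a subdivision of $K_4$ (by an easy degree count).
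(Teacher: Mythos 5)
Your proposal contains a genuine gap at exactly the point you yourself call ``the substance of the argument'': eliminating the proper-2-cutset and double-star-cutset outcomes of Theorem~\ref{th:decISK4}. This elimination is not a routine consequence of triangle-freeness; it is the main contribution of the paper, and your sketch of it (take a vertex-minimal counterexample, invoke a ``routine minimality argument'', and hope that forbidding induced subdivisions of $K_4$ ``should force'' the cutset to collapse to a star cutset or an edge) is not a proof. There is no evidence that a direct local analysis of an arbitrary double star cutset or proper 2-cutset can be completed, and the paper never attempts one: its proof of Theorem~\ref{maindecomp} does not start from Theorem~\ref{th:decISK4} at all. Instead, Lemma~\ref{l:beginT} (triangle-freeness kills the prism outcome of Lemma~\ref{l:begin}) shows that a non-series-parallel graph in the class contains a wheel or a $K_{3,3}$; the $K_{3,3}$ case is settled by Lemma~\ref{l:decK33T}; and the wheel case is settled by the machinery of Lemmas~\ref{w1}--\ref{w4}, which shows that a proper wheel with fewest spokes yields a wheel decomposition, hence a star cutset. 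So the route you propose is not the paper's route, and the one step of it that would do the real work is missing.

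Once this gap is acknowledged, what remains of your proposal is its final fallback: Theorem~\ref{decomp} follows from Theorem~\ref{maindecomp} together with the paper's remark that a wheel decomposition yields a star cutset. That is verbatim the paper's proof, so the rest of your text adds nothing to it: your disposal of the line-graph outcome is correct but never needed on the paper's route (line graphs with branch vertices contain triangles, which is why prisms, not line graphs, are what triangle-freeness has to kill), and your closing observation that the complete-bipartite outcome is redundant in the statement (large complete bipartite graphs have star cutsets, small ones are series-parallel) is true but likewise unnecessary. In short: either you cite Theorem~\ref{maindecomp}, in which case your proof coincides with the paper's one-line proof and the attempted alternative route is dead weight, or you do not cite it, in which case the central step of your argument is unproved.
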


\begin{proof}
  Follows directly from Theorem~\ref{maindecomp}. 
\end{proof}

We state now several lemmas from \cite{nicolas:isk4} that we need.  A
\emph{hole} in a graph is a chordless cycle of length at least~4.  A
\emph{prism} is a graph made of three vertex disjoint paths of length
at least~1, $P_1 = a_1\dots b_1$, $P_2 = a_2\dots b_2$ and $P_3 =
a_3\dots b_3$, with no edges between them except the following:
$a_1a_2$, $a_1a_3$, $a_2a_3$, $b_1b_2$, $b_1b_3$, $b_2b_3$.  Note that
the union of any two of the paths of a prism induces a hole.  A {\em
  wheel $(H,x)$} is a graph that consists of a hole $H$ plus a vertex
$x\not\in V(H)$ that has at least three neighbors on $H$.

\begin{lemma}[see \cite{nicolas:isk4}]
  \label{l:begin}
  If $G$ is an ISK4-free graph, then either $G$ is a series-parallel
  graph,  or $G$ contains a prism, or $G$ contains a wheel or $G$
  contains $K_{3, 3}$. 
\end{lemma}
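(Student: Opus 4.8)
The plan is to prove the contrapositive: I assume $G$ is ISK4-free and contains no prism, no wheel, and no $K_{3,3}$, and I show that $G$ is series-parallel, that is, that no subgraph of $G$ is a subdivision of $K_4$. Suppose for contradiction that some subgraph of $G$ is a subdivision of $K_4$, and among all such subgraphs choose $R$ with $|V(R)|$ minimum. Write $a,b,c,d$ for its four branch vertices and $P_{xy}$ for the branch-path of $R$ joining $x$ to $y$. Since $G$ is ISK4-free and $R$ is itself a subdivision of $K_4$, $R$ cannot be an induced subgraph of $G$; hence there is an edge $uv$ of $G$ with $u,v\in V(R)$ and $uv\notin E(R)$, which I call a chord of $R$.

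First I would use the minimality of $R$ to discard the ``shortcut'' chords. If both ends of a chord are branch vertices, or one end is a branch vertex $x$ and the other an interior vertex of a branch-path incident to $x$, or both ends are interior vertices of the same branch-path, then substituting the chord for the corresponding sub-path of $R$ produces a subdivision of $K_4$ with strictly fewer vertices, contradicting the choice of $R$. Therefore every chord falls into exactly one of three types: (A) one end is a branch vertex and the other is an interior vertex of the branch-path opposite to it; (C1) the two ends are interior vertices of two distinct branch-paths that share a branch vertex; (C2) the two ends are interior vertices of two branch-paths with no branch vertex in common. The point of this classification is that in each case the graph $R\cup\{uv\}$ is, as a topological object, a subdivision of a fixed small graph: a wheel in case (A), a prism in case (C1), and $K_{3,3}$ in case (C2).

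Each case is then dispatched as follows. In case (C2), say $u\in \mathrm{int}(P_{ab})$ and $v\in \mathrm{int}(P_{cd})$: the nine paths formed by $P_{ac},P_{ad},P_{bc},P_{bd}$, the chord $uv$, and the two halves of each of $P_{ab}$ and $P_{cd}$ exhibit a subdivision of $K_{3,3}$ with parts $\{a,b,v\}$ and $\{c,d,u\}$, which is the last alternative of the lemma. In case (C1), say $u\in \mathrm{int}(P_{ab})$ and $v\in \mathrm{int}(P_{ac})$: then $R\cup\{uv\}$ is a subdivision of the prism with triangles $\{a,u,v\}$ and $\{b,c,d\}$ and connecting paths $P_{ad}$, the $u$-to-$b$ part of $P_{ab}$, and the $v$-to-$c$ part of $P_{ac}$. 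To obtain a genuine prism I must show that all six triangle-edges are present; the chord $uv$ is one of them, and I would argue that if any other triangle-edge were subdivided then deleting the interior of that subdivided side and rerouting through the chord exhibits an induced subdivision of $K_4$ in $G$, contradicting ISK4-freeness. Hence the two triangles are genuine and $G$ contains a prism. In case (A), say the chord is $av$ with $v\in \mathrm{int}(P_{bc})$: the reductions that reroute through $v$ force, by minimality of $R$, the spokes $P_{ab}$ and $P_{ac}$ to be single edges, so $a$ is adjacent to the three vertices $b,c,v$ lying on the hole $P_{bc}\cup P_{bd}\cup P_{cd}$, and $G$ contains a wheel.

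I expect case (C1) to be the main obstacle, for two reasons. The decisive step there is the extraction of an induced subdivision of $K_4$ from a prism in which one triangle-edge has been subdivided; verifying that the resulting subgraph is genuinely induced is exactly the kind of chord-chasing that the whole argument is trying to avoid, and it has to be carried out carefully for each position of the subdivided side. Secondly, and pervasively across all three cases, I have been analysing a single chord as if it were the only one, whereas $R$ may have many chords; before concluding that a hole is chordless, that a triangle is genuine, or that a $K_{3,3}$-subdivision is present, one must check that a second chord either produces the desired configuration directly or contradicts the minimality of $R$. Organising this bookkeeping, rather than the individual case computations, which are short, is where I anticipate the real work.
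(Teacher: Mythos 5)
Your skeleton --- a subdivision $R$ of $K_4$ with $|V(R)|$ minimum, elimination of ``shortcut'' chords by minimality, and the classification of the remaining chords into your types (A), (C1), (C2) --- is sound, and it is essentially the strategy of the proof in \cite{nicolas:isk4} (note that the present paper does not prove this lemma at all; it imports it from that reference). But two of your three cases do not close. The serious gap is case (C2): you stop at ``$R\cup\{uv\}$ is a subdivision of $K_{3,3}$'' and declare this to be the last alternative of the lemma. It is not. In this paper ``$G$ contains $K_{3,3}$'' means $G$ has an \emph{induced} subgraph isomorphic to $K_{3,3}$ (and the way the lemma is used, via Lemma~\ref{l:decK33T}, genuinely needs the induced version); a subgraph subdivision of $K_{3,3}$ is far weaker and is not an outcome of the lemma. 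What is missing is a second round of minimality arguments: for example, the $K_4$-subdivision with branch vertices $a,c,d,v$ whose $a$--$v$ path is $aP_{ab}u$ followed by the chord, and whose $c$--$d$ path runs through $b$, omits the interior of $uP_{ab}b$ and so forces $ub\in E(G)$; iterating such reroutes collapses all nine paths, so that $R\cup\{uv\}$ is a $K_{3,3}$ on the six vertices $a,b,c,d,u,v$; and further reroutes exclude extra edges (e.g.\ if $av\in E(G)$, then $\{a,v,c,d\}$ together with the path $c\,b\,d$ is a five-vertex subdivision of $K_4$, contradicting minimality), which gives inducedness.

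In case (C1) you invoke the wrong tool: rerouting through the chord produces a subdivision of $K_4$ only \emph{as a subgraph}, and nothing guarantees it is induced --- guaranteeing inducedness is precisely the difficulty this whole lemma exists to overcome, so ``contradicting ISK4-freeness'' is not available. The correct contradiction is again with minimality of $R$: for instance the subdivision with branch vertices $b,c,d,u$, whose $u$--$c$ path is the chord followed by $vP_{ac}c$ and whose $u$--$d$ path runs through $a$ along $P_{ab}$ and $P_{ad}$, omits the interior of $aP_{ac}v$ and so forces $av\in E(G)$; symmetric reroutes force $au\in E(G)$ and the triangle $bcd$. Finally, the multi-chord bookkeeping that you defer is not mere bookkeeping, and in places it changes the outcome: if two type-(C1) chords $uv$ and $uw$ attach at the same branch vertex $a$ (with $v$, $w$ the neighbors of $a$ on $P_{ac}$, $P_{ad}$), then each of the two candidate prisms contains the other chord and so is \emph{not} induced; the correct conclusion there is a wheel with center $u$ and rim $a\,v\,P_{ac}\,c\,d\,P_{ad}\,w\,a$. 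Likewise in case (A) you must still prove that the rim $P_{bc}\cup P_{bd}\cup P_{cd}$ is a hole. A priority ordering on chord types, with arguments of the above kind at each step, is needed to finish; as submitted, the proof establishes the lemma only with ``subdivision of $K_{3,3}$ as a subgraph'' in place of ``$K_{3,3}$'', which is too weak for the paper's purposes.
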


A \emph{complete tripartite graph} is a graph that can be partitioned
into three stable sets so that every pair of vertices from two different
stable sets is an edge of the graph.

\begin{lemma}[L\'ev\^eque, Maffray and Trotignon \cite{nicolas:isk4}]
  \label{l:decK33}
  If $G$ is an ISK4-free graph that contains $K_{3, 3}$, then either
  $G$ is a complete bipartite graph, or $G$ is a complete tripartite 
  graph, or $G$ has a clique-cutset of size at most~3.
\end{lemma}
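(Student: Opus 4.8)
The plan is to bootstrap from a single copy of $K_{3,3}$ to a maximal complete bipartite ``core'' and then understand exactly how the remaining vertices can attach to it. Concretely, among all induced complete bipartite subgraphs of $G$ with both sides of size at least~$3$ (one exists, since $G$ contains $K_{3,3}$), I would fix $H=G[X\cup Y]$ maximizing $|X|+|Y|$, so that $|X|,|Y|\geq 3$. The whole argument is a classification of every $v\in V(G)\setminus(X\cup Y)$ according to $N(v)\cap(X\cup Y)$, driven entirely by ISK4-avoidance together with the maximality of $H$.

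The first main step is the dichotomy: every outside vertex is either \emph{full} (complete to $X\cup Y$) or \emph{light} (with at most one neighbour in $X$ and at most one in $Y$). The engine here is that $|X|,|Y|\geq 3$ leaves spare vertices to subdivide edges. For instance, if $v$ has two neighbours $x_1,x_2\in X$, then a third vertex $y$ on the other side that is \emph{not} adjacent to $v$ lets me build an induced $K_4$-subdivision with branch vertices among $v,x_1,x_2$ and one more, subdividing the ``missing'' edge $x_1x_2$ (and, when needed, a second missing edge) through available vertices of $X\cup Y$; since $G$ is ISK4-free this cannot happen, forcing $v$ to be complete to the opposite side, and symmetrically to its own side. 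When $v$ has no such pair of neighbours it is light by definition. Next I would show the full vertices form a stable set $T$: two adjacent full vertices together with one vertex of $X$ and one of $Y$ induce a $K_4$, which is itself an ISK4. Consequently the core $C=X\cup Y\cup T$ is always a complete tripartite graph (complete bipartite if $T=\varnothing$). The same $K_4$-detection also pins down light vertices: I would prove that the core-neighbourhood $N(v)\cap C$ of a light vertex is always a clique of size at most~$2$, because $v$ cannot simultaneously have neighbours in $X$, in $Y$ and in $T$ (that would induce $K_4$), nor be adjacent to two distinct full vertices (subdividing the non-edge between them through a vertex of $X$ gives an ISK4).

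With the core identified, the conclusion splits cleanly. If there are no light vertices then $V(G)=C$ and $G$ is complete tripartite, or complete bipartite when $T=\varnothing$, which is one of the allowed outcomes. Otherwise I would take a connected component $D$ of $G-C$ and argue that its attachment $A=N(D)\cap C$ is a clique of size at most~$3$; since $|X|,|Y|\geq 3$ the set $C\setminus A$ is nonempty and meets at least two parts, so it stays connected, and $A$ is then a clique cutset separating $D$ from $C\setminus A$. The size bound $3$ (rather than $2$) is exactly what is needed to accommodate attachments to a triangle $\{x,y,t\}$ of the tripartite core, explaining the statement's ``at most~$3$''.

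The main obstacle is precisely this last step, bounding the \emph{joint} attachment $A$ of a whole light component $D$. Individual light vertices attach through a clique of size $\leq 2$, but different light vertices in $D$ may hit different core vertices, and a path through $D$ joining, say, $x_1$ and $x_2$ creates a fourth internally disjoint $x_1x_2$-path alongside the three through $Y$, threatening an ISK4 unless the internal light vertices are suitably non-adjacent to the core. Controlling how light vertices chain together, and ruling out attachments that are not cliques or are too large, is where essentially all the remaining ISK4 case-work concentrates; I expect to prove that any two core vertices in $A$ are adjacent and that $A$ spans at most one vertex from each of $X$, $Y$ and $T$, which yields the desired clique of size at most~$3$.
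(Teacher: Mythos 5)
First, a point of reference: this paper does not prove Lemma~\ref{l:decK33} at all --- it imports it from \cite{nicolas:isk4} --- so the comparison is with the original proof of L\'ev\^eque, Maffray and Trotignon, and your plan does follow the same general route (a maximal complete multipartite ``core'' plus an analysis of how the rest of the graph attaches to it). The first half of your proposal is essentially correct and provable exactly by the short constructions you indicate: with $H=G[X\cup Y]$ maximal and $|X|,|Y|\geq 3$, the full/light dichotomy holds (using maximality to kill the case where $v$ is complete to $X$ and anticomplete to $Y$), the set $T$ of full vertices is stable, so $C=X\cup Y\cup T$ is complete bipartite or tripartite, and a single light vertex meets $C$ in a clique of size at most two.

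The genuine gap is the step you yourself flag as ``the main obstacle'': you never prove that the joint attachment $A=N(D)\cap C$ of a component $D$ of $G\setminus C$ is a clique; the proposal ends with ``I expect to prove\dots''. This is not a routine verification but the heart of the lemma, and the obstacle is real rather than notational. Concretely, take $|X|=|Y|=3$ and an induced path $x_1d_1d_2d_3x_2$ with interior in $D$: your template ISK4 (the four-cycle $x_1yx_2y'$, the path through $D$, and a path $yx_3y'$) needs two vertices of $Y$ and one of $X\setminus\{x_1,x_2\}$ with no neighbours among $d_1,d_2,d_3$, and these need not exist, since each $d_i$ may see one vertex of $X$ and one of $Y$, so the interior can jointly dominate all of $Y$ and all of $X$. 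One must then find structurally different subdivisions, and the construction changes with the configuration: if adjacent interior vertices $d_i,d_{i+1}$ see distinct $y_a\neq y_b$ in $Y$ and jointly miss two vertices $x,x'$ of $X$, an ISK4 arises with branch vertices $y_a,y_b,x,x'$, the path $y_ad_id_{i+1}y_b$, and a path $xy_cx'$; but if $d_i$ sees $\{x_p,y_a\}$ and $d_{i+1}$ sees $\{x_q,y_b\}$ with $x_p\neq x_q$ and $y_a\neq y_b$, that template fails (only one vertex of $X$ is missed) and the ISK4 one finds is a different one, with branch vertices $d_i,x_p,y_a,x_q$, edges $d_ix_p$, $d_iy_a$, $x_py_a$, $y_ax_q$, and paths $d_id_{i+1}x_q$ and $x_py_cx_q$. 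Chaining such local facts along a shortest connection in $D$ (attachments to $T$ included) into the statement ``any two vertices of $A$ are adjacent'' is precisely the case analysis that constitutes the bulk of the original proof in \cite{nicolas:isk4}; without it, what you have established is the structure of the core and of single outside vertices, not the clique-cutset conclusion, so the proof is incomplete at its decisive step.
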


We now state  the consequences of the lemmas above for triangle-free
graphs.  

\begin{lemma}
  \label{l:beginT}
  If $G$ is a \{triangle, ISK4\}-free graph, then either $G$ is a
  series-parallel graph, or $G$ contains a wheel or $G$ contains
  $K_{3, 3}$.
\end{lemma}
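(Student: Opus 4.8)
The plan is to derive Lemma~\ref{l:beginT} from Lemma~\ref{l:begin} by using the triangle-freeness to eliminate the prism outcome. Starting from an arbitrary \{triangle, ISK4\}-free graph $G$, I apply Lemma~\ref{l:begin}, which gives four possibilities: $G$ is series-parallel, $G$ contains a prism, $G$ contains a wheel, or $G$ contains $K_{3,3}$. Three of these four outcomes are already exactly the conclusions I want (series-parallel, wheel, $K_{3,3}$), so the entire content of the lemma reduces to showing that the remaining case cannot occur, namely that a triangle-free graph cannot contain a prism.

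The key observation is the structure of a prism as defined just before Lemma~\ref{l:begin}: it consists of three vertex-disjoint paths joined so that the two triples of endpoints $\{a_1,a_2,a_3\}$ and $\{b_1,b_2,b_3\}$ each induce a triangle (the edges $a_1a_2$, $a_1a_3$, $a_2a_3$ are present, and likewise $b_1b_2$, $b_1b_3$, $b_2b_3$). Hence any prism contains $K_3$ as a subgraph on $\{a_1,a_2,a_3\}$, and moreover this is an \emph{induced} triangle since by definition there are no other edges among the prism's vertices. Because $G$ is triangle-free, $G$ contains no induced $K_3$, so $G$ cannot contain a prism. This immediately rules out the second outcome of Lemma~\ref{l:begin}.

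Putting these together: applying Lemma~\ref{l:begin} to $G$ and discarding the impossible prism case leaves precisely the three alternatives in the statement, so $G$ is series-parallel, contains a wheel, or contains $K_{3,3}$. This completes the argument. The proof is essentially a one-line specialization, and I anticipate no real obstacle; the only point requiring a moment's care is confirming that the triangle forced by a prism is genuinely induced in $G$, which follows since $G$ contains the prism as an \emph{induced} subgraph and the triangle on $\{a_1,a_2,a_3\}$ is induced within the prism.
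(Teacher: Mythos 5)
Your proof is correct and follows exactly the paper's argument: apply Lemma~\ref{l:begin} and discard the prism outcome because every prism contains a triangle (necessarily induced in $G$, since the prism is an induced subgraph). The paper's proof is just a one-line version of the same reasoning.
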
 

\begin{proof}
  Clear from Lemma~\ref{l:begin} and the fact that every prism
  contains a triangle.
\end{proof}

\begin{lemma}
  \label{l:decK33T}
  If $G$ is an \{ISK4, triangle\}-free graph that contains $K_{3,
    3}$, then either $G$ is a complete bipartite graph, or $G$ has a
  clique-cutset of size at most~2.
\end{lemma}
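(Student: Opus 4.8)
The plan is to invoke Lemma~\ref{l:decK33} and then eliminate the two outcomes of that lemma that are not permitted by the conclusion we want, using only the absence of triangles. Since $G$ is ISK4-free and contains $K_{3,3}$, Lemma~\ref{l:decK33} tells us that $G$ is complete bipartite, or $G$ is complete tripartite, or $G$ has a clique cutset of size at most~3. The first of these is already one of the two allowed conclusions, so it suffices to deal with the remaining two cases.

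First I would dispose of the complete tripartite case. Suppose $G$ is complete tripartite, with vertex set partitioned into stable sets $A$, $B$, $C$. If all three of these were non-empty, then choosing one vertex from each would produce a triangle, since every pair of vertices from distinct parts is adjacent; this contradicts that $G$ is triangle-free. Hence at least one part, say $C$, is empty, and then $G$ is the complete bipartite graph with sides $A$ and $B$. This returns us to the first allowed conclusion.

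Next I would handle the clique cutset. Here the key observation is that in a triangle-free graph every clique has at most two vertices, since a clique on three vertices is exactly a triangle. Therefore a clique cutset of size at most~3 is in fact a clique cutset of size at most~2, which is the second allowed conclusion.

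Combining the three cases gives the statement. I do not expect any genuine obstacle: the whole argument is a short case analysis on top of Lemma~\ref{l:decK33}, with the triangle-free hypothesis doing all the work by forbidding both a triangle inside a complete tripartite graph and a clique of size three in a cutset. The only point that requires a moment of care is to note that the complete tripartite case does not vanish but genuinely collapses into the complete bipartite case rather than into some degenerate graph; this is immediate once one part is shown to be empty.
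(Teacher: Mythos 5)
Your proof is correct and follows essentially the same route as the paper, which also derives the statement from Lemma~\ref{l:decK33} by noting that complete tripartite graphs and clique cutsets of size~3 contain triangles. Your only addition is the (reasonable) care about a complete tripartite graph with an empty part collapsing to a complete bipartite graph, a point the paper treats as immediate.
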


\begin{proof}
  Clear from Lemma~\ref{l:decK33} and the fact that complete
  tripartite graphs and clique-cutsets of size~3 contain triangles.
\end{proof}

It is now easy to prove the next theorem. 

\begin{theorem}
  \label{th:imply}
  If Conjecture~\ref{conj1} is true, then Conjecture~\ref{conj2} is
  true. 
\end{theorem}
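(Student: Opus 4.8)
The plan is to prove Theorem~\ref{th:imply} by showing that Conjecture~\ref{conj1} forces a global structural property—bounded degeneracy—on the class of \{triangle, ISK4\}-free graphs, and then to combine this with the decomposition tools already established. The key observation is that Conjecture~\ref{conj1} concerns \{triangle, ISK4, $K_{3,3}$\}-free graphs, whereas Conjecture~\ref{conj2} concerns the larger class of \{triangle, ISK4\}-free graphs. So the first task is to understand how to handle the presence of $K_{3,3}$. Here Lemma~\ref{l:decK33T} is decisive: any \{ISK4, triangle\}-free graph containing $K_{3,3}$ is either complete bipartite or has a clique cutset of size at most~2. Complete bipartite graphs are trivially $2$-colorable, hence $3$-colorable, and clique cutsets of size at most~2 allow us to glue colorings of the pieces together without increasing the chromatic number beyond the maximum over the blocks (a clique cutset is a standard reducible configuration for vertex coloring). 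This suggests an induction on $|V(G)|$.

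Concretely, I would argue as follows. Suppose Conjecture~\ref{conj1} holds, and let $G$ be a \{triangle, ISK4\}-free graph; I want to $3$-color $G$ by induction on the number of vertices. If $G$ contains $K_{3,3}$, then by Lemma~\ref{l:decK33T} either $G$ is complete bipartite (done, as it is $2$-colorable) or $G$ has a clique cutset $C$ of size at most~2. In the cutset case, write $G$ as the union of two proper induced subgraphs $G_1$ and $G_2$ overlapping exactly in $C$; each $G_i$ is again \{triangle, ISK4\}-free and smaller, so by the inductive hypothesis each has a proper $3$-coloring, and since $C$ is a clique of size at most~2 we may permute colors on $G_2$ so that the two colorings agree on $C$, yielding a proper $3$-coloring of $G$. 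So I may assume $G$ is $K_{3,3}$-free, i.e. $G$ is \{triangle, ISK4, $K_{3,3}$\}-free.

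Now Conjecture~\ref{conj1} applies directly: $G$ contains a vertex $v$ of degree at most~2. Delete $v$; the graph $G \sm v$ is still \{triangle, ISK4, $K_{3,3}$\}-free (these classes are closed under taking induced subgraphs), and it is smaller, so by induction it has a proper $3$-coloring. Since $v$ has at most two neighbors, at most two colors are forbidden for $v$, so a third color remains available and the coloring extends to $v$. This completes the induction. In fact the $K_{3,3}$-free case shows directly that every \{triangle, ISK4, $K_{3,3}$\}-free graph is $3$-degenerate (indeed $2$-degenerate, as every induced subgraph inherits a vertex of degree at most~2 from Conjecture~\ref{conj1}), which is the cleanest way to package the extension step.

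I do not expect a serious obstacle here: the theorem is essentially a bookkeeping argument assembling Lemma~\ref{l:decK33T} with the hypothesis of Conjecture~\ref{conj1}. The only point requiring mild care is the clique-cutset gluing step—one must verify that a $3$-coloring of each side can be aligned on a clique of size at most~2, which is immediate since fewer than three vertices impose no obstruction once colors are permuted, and that the two sides $G_1, G_2$ are genuinely smaller than $G$ (which holds because a clique cutset separates $G$ into at least two nonempty parts). I would also note explicitly that hereditarity of the \{triangle, ISK4\}-free and \{triangle, ISK4, $K_{3,3}$\}-free classes under induced subgraphs is what makes the induction legitimate at each step.
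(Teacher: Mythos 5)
Your proposal is correct and follows essentially the same argument as the paper: induction on $|V(G)|$, handling the $K_{3,3}$ case via Lemma~\ref{l:decK33T} (complete bipartite or clique cutset, with colorings glued across the cutset), and otherwise applying Conjecture~\ref{conj1} to delete a vertex of degree at most~2 and extend a $3$-coloring. The only differences are cosmetic: the paper glues over all components of $G \sm K$ at once rather than two sides, and leaves the color-permutation step implicit.
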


\begin{proof}
  Suppose that Conjecture~\ref{conj1} is true.  Let $G$ be a
  \{triangle, ISK4\}-free graph.  We prove Conjecture~\ref{conj2} by
  induction on $|V(G)|$.  If $|V(G)| = 1$, the outcome is clearly
  true.  

  If $G$ contains $K_{3, 3}$, then by Lemma~\ref{l:decK33T},
  either $G$ is bipartite and therefore 3-colorable, or $G$ has a
  clique-cutset $K$.  In this last case, we recover a 3-coloring of
  $G$ from 3-colorings of $G[K\cup C_1]$, \dots, $G[K\cup C_k]$ where
  $C_1, \dots, C_k$ are the connected components of $G\sm K$.  

  If $G$ contains no $K_{3, 3}$, then by Conjecture~\ref{conj1} it has
  a vertex $v$ of degree at most~2.  By the induction hypothesis, $G
  \sm \{v\}$ has a 3-coloring, and we 3-color $G$ by giving to $v$ a
  color not used by its two neighbors.
\end{proof}

\section{Proof of the decomposition theorem}
\label{sec:proofDec}

\subsection*{Appendices to a hole}

When $x$ is a vertex of a graph $G$, $N(x)$ denotes the neighborhood of
$x$, that is the set of all vertices of $G$ adjacent to $x$.  We set
$N[x] = N(x) \cup \{x\}$.  When $C\subseteq V(G)$, we set $N(C) =
(\cup_{x\in C} N(x)) \sm C$.  When $G$ is a graph, $K$ an induced
subgraph of $G$, and $C$ a set of vertices disjoint from $V(K)$, the
\emph{attachment} of $C$ to $K$ is $N(C) \cap V(K)$, that we also
denote by $N_K(C)$.

When $P=p_1 \dots p_k$ is a path and $1\leq i,  j \leq k$, we
denote by  $p_i P p_j$ the $p_ip_j$-subpath of $P$. 
Let $A$ and $B$ be two disjoint vertex sets such that no vertex of $A$ is
adjacent to a vertex of $B$. A path $P=p_1 \ldots p_k$ {\em connects
$A$ and $B$} if either $k=1$ and $p_1$ has a neighbor in $A$ and a 
neighbor in $B$, or $k>1$ and one of the two endvertices of $P$ is adjacent
to at least one vertex in $A$ and the other is adjacent to at least one vertex
in $B$. $P$ is a {\em direct connection between $A$ and $B$} if in
$G[V(P) \cup A \cup B]$ no path connecting $A$ and $B$ is shorter than $P$.
The direct connection $P$ is said to be {\em from $A$ to $B$}
if $p_1$ is adjacent to a vertex of $A$ and $p_k$ is adjacent to a vertex
of $B$.

Let $H$ be a hole.  A chordless path $P=p_1 \ldots p_k$ in $G
\setminus H$ is an {\em appendix} of $H$ if no vertex of $P \setminus \{
p_1,p_k\}$ has a neighbor in $H$, and one of the following holds:
\begin{itemize}
\item[(i)] $k=1$, $N(p_1) \cap H=\{ u_1,u_2 \}$ and $u_1u_2$ is not an
edge, or
\item[(ii)] $k>1$, $N(p_1) \cap H=\{ u_1\}$, 
$N(p_k) \cap H=\{ u_2\}$ 
and $u_1\neq u_2$.
\end{itemize}

So $\{ u_1,u_2 \}$ is an attachment of $P$ to $H$. The two $u_1u_2$-subpaths
of $H$ are called the {\em sectors} of $H$ w.r.t. $P$.

Let $Q$ be another appendix of $H$, with attachment $\{ v_1,v_2 \}$.
Appendices $P$ and $Q$ are {\em crossing} if one sector of $H$ w.r.t. $P$
contains $v_1$, the other contains $v_2$ and 
$\{ u_1,u_2 \} \cap \{ v_1,v_2\} = \emptyset$.

\begin{lemma}\label{ca}
  If $G$ is an \{ISK4, $K_{3,3}$\}-free graph, then no two appendices
  of a hole of $G$ can be crossing.
\end{lemma}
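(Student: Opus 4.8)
The plan is to argue by contradiction: assuming that $P$ (with attachment $\{u_1,u_2\}$) and $Q$ (with attachment $\{v_1,v_2\}$) cross, I would exhibit an ISK4 or a $K_{3,3}$ inside $G[V(H)\cup V(P)\cup V(Q)]$, contradicting the hypothesis. By the crossing condition the four vertices occur in the cyclic order $u_1,v_1,u_2,v_2$ along $H$ and are pairwise distinct, so they cut $H$ into four arcs $A_1$ (from $u_1$ to $v_1$), $A_2$, $A_3$ and $A_4$ (from $v_2$ back to $u_1$), whose interiors avoid the four attachment vertices. The model situation is that $P$ and $Q$ are vertex-disjoint with no edge between them; then the four arcs, together with $P$ realizing the edge $u_1u_2$ and $Q$ realizing the edge $v_1v_2$, form a subdivision of $K_4$ with branch vertices $u_1,v_1,u_2,v_2$. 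This subdivision is \emph{induced}: $H$ is chordless, so the arcs meet only at branch vertices; the interiors of $P$ and $Q$ have no neighbour in $H$ by the definition of an appendix; and $P,Q$ are anticomplete by assumption. Hence it is an ISK4, the desired contradiction.

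All the difficulty therefore lies in the interaction between $P$ and $Q$. First I would record that $M:=V(P)\cup V(Q)$ induces a connected subgraph of $G\setminus H$ whose attachment to $H$ is exactly $\{u_1,v_1,u_2,v_2\}$: the only $M$--$H$ edges are those incident to the appendix endpoints, and these reach only $u_1,u_2$ (from $P$) and $v_1,v_2$ (from $Q$). I would then replace $M$ by a minimal connected subgraph $T\subseteq M$ still reaching all four attachment vertices; minimality makes $T$ a tree whose leaves lie among the appendix endpoints, so $T$ has at most four leaves. The problem is thus reduced to the following clean statement: a hole $H$ together with a tree $T$ attaching to $H$ at four points in alternating cyclic order, no interior vertex of $T$ having a neighbour in $H$, contains an ISK4 or a $K_{3,3}$.

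To finish I would analyse the shape of $T$. If some vertex $z$ of $T$ reaches three of the four attachment vertices by internally disjoint paths, then $z$ and those three vertices are the branch vertices of a subdivision of $K_4$, its six branches being the three paths out of $z$ and the three arcs of $H$ joining the three chosen attachment vertices (one of these being the long arc through the fourth attachment vertex). If $T$ is a path, its two ends are two of the attachment vertices and the other two hang off interior vertices of $T$, and one again selects four branch vertices (the two ends, one interior contact, and one attachment vertex) and routes the missing $K_4$-edge along the arc of $H$ through the fourth attachment vertex. In every case the reason the resulting subdivision is induced is that no interior vertex of $T$ has a neighbour on $H$, so the long arc used as a branch carries no chord to the rest of the figure; in particular the ``hub'' is never adjacent to the fourth attachment vertex.

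I expect the main obstacle to be precisely this inducedness bookkeeping around the \emph{fourth} attachment vertex: when the subtree connecting three of the points unavoidably passes through the neighbour of the fourth attachment vertex, that neighbour becomes a chord to the long arc and the naive $K_4$ breaks. Handling this is where I would lean on choosing $P$ and $Q$ minimal (equivalently, as direct connections in the sense defined above) to limit how the two paths interweave, and where the worst, most symmetric configurations should yield a $K_{3,3}$ rather than an ISK4. Isolating these few degenerate trees, and checking internal disjointness and the absence of chords among the six chosen paths in each, is the delicate heart of the argument.
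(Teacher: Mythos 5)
Your opening step (if $P$ and $Q$ are vertex-disjoint with no edges between them, then $H\cup P\cup Q$ induces an ISK4) agrees with the paper, but beyond it the proposal has genuine gaps. The proposed reduction to a tree $T$ is flawed on two counts. First, an edge-minimal connected subgraph of $G[V(P)\cup V(Q)]$ reaching all four attachment vertices is a tree only as a \emph{subgraph}: the graph induced by $V(T)$ can contain many edges between $P$ and $Q$ that are not tree edges, and any candidate subdivision of $K_4$ built from tree paths plus arcs of $H$ can be destroyed by exactly such chords. This is not ``inducedness bookkeeping'': controlling those extra adjacencies is the entire content of the lemma, and your sketch explicitly defers it (``the delicate heart of the argument''). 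Second, the hypothesis of your ``clean statement'' --- that no interior vertex of $T$ has a neighbour in $H$ --- need not hold after the reduction: an appendix endpoint such as $p_1$ (which is adjacent to $u_1\in V(H)$) can perfectly well be an \emph{internal} vertex of $T$ (for instance when $Q$ attaches to $P$ only through the edge $q_1p_1$), so the long arc of $H$ that you use as a branch can have a chord into the middle of the tree. Your fallback of ``choosing $P$ and $Q$ minimal'' is also not available as stated, since the lemma must hold for the given pair of crossing appendices; a minimal-counterexample setup would itself require proof that crossing is preserved.

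You also never exhibit the configuration in which $K_{3,3}$ actually arises; you only predict that symmetric configurations ``should yield'' it. In the paper's proof this happens at one precise place: when both appendices are single vertices $p_1,q_1$ with $p_1q_1$ an edge and all four pairs $u_1v_1$, $v_1u_2$, $u_2v_2$, $v_2u_1$ are edges (i.e., $H$ is a $4$-hole), in which case $H\cup\{p_1,q_1\}$ is exactly a $K_{3,3}$. The paper avoids any tree reduction: it takes $p_i$, the vertex of $P$ of lowest index having a neighbour in $Q$ (so that $p_1\dots p_i$ is an induced path with no other contact with $Q$), and splits into cases according to whether $i=k$ and how many neighbours $p_i$ has in $Q$ (one, two, or at least three), producing an explicit induced ISK4 in every case except the $4$-hole one above. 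Without carrying out an analysis of this kind --- or repairing the tree reduction so that induced subgraphs, not just subgraphs, are under control --- the proposal remains a plan, not a proof.
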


\begin{proof}
Let $P=p_1\ldots p_k$ and $Q=q_1 \ldots q_l$ be appendices of a hole
$H$ of $G$, and suppose that they are crossing. 
Let $\{ u_1,u_2\}$ be the attachment of $P$ to $H$, and let
$\{ v_1,v_2\}$ be the attachment of $Q$ to $H$. 
So $\{ u_1,u_2\} \cap \{ v_1,v_2 \}=\emptyset$ and w.l.o.g. 
$u_1,v_1,u_2,v_2$ appear in this order when traversing $H$.
W.l.o.g. $u_1$ is
adjacent to $p_1$, and $v_1$ to $q_1$. 

A vertex of $P$ must be adjacent to or coincident with a vertex of $Q$,
since otherwise $H \cup P \cup Q$ induces an ISK4.
Note that $\{ p_1,p_k \} \cap \{ q_1,q_l\} =\emptyset$.
Let $p_i$ be the vertex of $P$ with lowest index that has a neighbor in $Q$,
and let $q_j$ (resp. $q_{j'}$) 
be the vertex of $Q$ with lowest (resp. highest) index adjacent to $p_i$.
Note that $p_i$ is not coincident with a vertex of $Q$.

First suppose that $i=k$. If $j \neq l$ then $H \cup P \cup \{ q_1, \ldots
,q_j \}$ induces an ISK4. So $j=l$. In particular, no vertex of $P$ is
coincident with a vertex of $Q$, and $p_kq_l$ is the only edge between $P$
and $Q$. If $k \neq 1$ then $H \cup Q \cup \{ p_k \}$ induces an ISK4.
So $k=1$ and by symmetry $l=1$. Since $H \cup \{ p_1,q_1 \}$ cannot
induce a $K_{3,3}$, w.l.o.g. $u_1v_1$ is not an edge. Let $H'$ be the
$u_1v_1$-subpath of $H$ that does not contain $u_2$ and $v_2$.
Then $(H \setminus H') \cup \{ u_1,v_1,p_1,q_1\}$ induces an ISK4.
Therefore $i<k$.

If $p_i$ has a unique neighbor in $Q$, then $H \cup Q \cup \{ p_1, \ldots
,p_i \}$ induces an ISK4. Let $H_Q'$ be the sector of $H$ w.r.t. $Q$ that
contains $u_1$. If $p_i$ has exactly two neighbors in $Q$, then
$H'_Q \cup Q \cup \{ p_1, \ldots ,p_i \}$ induces an ISK4. So $p_i$
has at least three neighbors in $Q$. In particular,
$j'\not\in \{ j,j+1 \}$. But then 
$H \cup \{ p_1, \ldots ,p_i,q_1, \ldots,q_j,q_{j'}, \ldots ,q_l \}$
induces an ISK4.
\end{proof}

\begin{lemma}\label{ca2}
  Let $G$ be an \{triangle, ISK4, $K_{3,3}$\}-free graph.  Let $H$ be
  a hole and $P=p_1 \ldots p_k$, $k>1$, a chordless path in $G \sm
  H$. Suppose that $|N(p_1) \cap H|=1$ or 2, $|N(p_k) \cap H|=1$ or 2,
  no vertex of $P \sm \{ p_1,p_k \}$ has a neighbor in $H$, $N(p_1)
  \cap H \not \subseteq N(p_k) \cap H$ and $N(p_k) \cap H \not
  \subseteq N(p_1) \cap H$.  Then $P$ is an appendix of $H$.
\end{lemma}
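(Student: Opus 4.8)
The plan is to work directly from the definition of appendix. Since $k>1$, being an appendix means satisfying condition~(ii), i.e.\ $|N(p_1)\cap H|=|N(p_k)\cap H|=1$ with the two neighbors distinct. The hypotheses already give $|N(p_1)\cap H|,|N(p_k)\cap H|\in\{1,2\}$, and if both equal~$1$ then the incomparability conditions $N(p_1)\cap H\not\subseteq N(p_k)\cap H$ and $N(p_k)\cap H\not\subseteq N(p_1)\cap H$ force these two neighbors to be distinct, so~(ii) holds immediately. Hence the whole content is to rule out the case where one attachment has size~$2$; since the hypotheses are invariant under reversing $P$, by the symmetry $p_1\leftrightarrow p_k$ it suffices to derive a contradiction when $|N(p_1)\cap H|=2$. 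So I would assume $A:=N(p_1)\cap H=\{u_1,u_2\}$, where $u_1u_2$ is a non-edge since $G$ is triangle-free, and write $B:=N(p_k)\cap H$, with $B\not\subseteq A$ and $A\not\subseteq B$.

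The observation I would exploit is that the single vertex $p_1$ is itself an appendix of $H$ of type~(i), with the two $u_1u_2$-subpaths of $H$ as its sectors; likewise, whenever $|B|=2$, the vertex $p_k$ is an appendix of type~(i). I would then split on $|B|$. If $|B|=1$, say $B=\{w\}$, then $B\not\subseteq A$ gives $w\notin\{u_1,u_2\}$, so $u_1,u_2,w$ are three distinct vertices of $H$ cutting it into three arcs; taking $u_1,u_2,w,p_1$ as branch vertices, with the three arcs, the edges $p_1u_1,p_1u_2$, and the path $w\,p_k\,p_{k-1}\cdots p_1$ as the six branches, yields an induced subdivision of $K_4$, a contradiction. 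If $|B|=2$, write $B=\{v_1,v_2\}$ (a non-edge, again by triangle-freeness), so that incomparability forces $|A\cap B|\le 1$. When $A\cap B=\emptyset$ and $v_1,v_2$ lie in different sectors of $H$ with respect to $p_1$, the appendices $p_1$ and $p_k$ are crossing, so Lemma~\ref{ca} gives the contradiction for free.

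The remaining, and most delicate, sub-cases are those in which $B$ does not straddle $A$: namely $A\cap B=\emptyset$ with $v_1,v_2$ in the same sector, and $|A\cap B|=1$. In each I would again exhibit an explicit induced subdivision of $K_4$, with branch vertices $p_1,p_k,u_1,u_2$: the branch $p_1p_k$ is routed through $P$, the branches $p_1u_1,p_1u_2$ are single edges, and the remaining three branches $u_1u_2,\,p_ku_1,\,p_ku_2$ are built from arcs of $H$ together with the edges joining $p_k$ to $v_1,v_2$. The main obstacle here is verifying that the subgraph produced is genuinely \emph{induced}, i.e.\ that no degenerate short arc creates a chord; the potentially dangerous coincidences are precisely the ones ruled out by triangle-freeness. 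For instance, in the same-sector case a chord would require $v_1v_2$ to be an edge, impossible since $p_k$ is adjacent to both; and in the overlap case, say $v_2=u_2$, a chord $u_2v_1$ would likewise make a triangle with $p_k$, while for $k=2$ the vertices $u_2,p_1,p_2$ would already form a triangle, so that sub-case cannot occur at all. Checking that the four branch vertices have degree exactly three and that the arcs are internally disjoint then completes each sub-case, and altogether this shows $|A|=|B|=1$, whence $P$ is an appendix.
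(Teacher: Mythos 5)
Your proposal is correct and takes essentially the same route as the paper's proof: the mixed case (one endpoint with one neighbor, the other with two) yields $H\cup P$ as an induced ISK4, and in the $2$--$2$ case Lemma~\ref{ca} excludes crossing, after which an explicit induced ISK4 is exhibited. The only difference is economy: the paper compresses your last two sub-cases (same sector, and $|A\cap B|=1$) into a single step, observing that non-crossing forces $N(p_k)\cap H$ to lie in one sector $H'$ of $H$ w.r.t.\ $p_1$, whence $H'\cup P$ induces an ISK4.
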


\begin{proof}
  Assume not. If one of $p_1$ or $p_k$ has one neighbor in $H$ and the
  other one has two neighbors in $H$, then $H \cup P$ induces an ISK4.
  So $|N(p_1) \cap H|=|N(p_k) \cap H|=2$, and hence (since $G$ is
  triangle-free) both $p_1$ and $p_k$ are appendices of $H$. By Lemma
  \ref{ca}, $p_1$ and $p_k$ cannot be crossing.  So for a sector $H'$
  of $H$ w.r.t. $p_1$, $N(p_k) \cap H \subseteq H'$.  But then $H'
  \cup P$ induces an ISK4.
\end{proof}

\subsection*{Wheels}

Let $(H,x)$ be a wheel contained in a graph $G$.
A \emph{sector} is a subpath of $H$ whose
endvertices are adjacent to $x$ and interior vertices are not.  
Two sectors are {\em consecutive} or {\em adjacent} 
if they have an endvertex in
common.  

Throughout this section we use the following notation for a wheel
$(H,x)$. We denote by $x_1, \ldots ,x_n$ the neighbors of $x$ in $H$,
appearing in this order when traversing $H$.  In this case, we also
say that $(H, x)$ is an \emph{$n$-wheel}.  For $i=1, \ldots ,n$, $S_i$
denotes the sector of $(H,x)$ whose endvertices are $x_i$ and
$x_{i+1}$ (here and throughout this section we assume that indices are
taken modulo $n$).

A path $P$ is an {\em appendix} of a wheel $(H,x)$ if the following
hold:
\begin{itemize}
\item[(i)] $P$ is an appendix of $H$,
\item[(ii)] each of the sectors of $H$ w.r.t. $P$ properly contains
a sector of $(H,x)$, and
\item[(iii)] $x$ has at most one neighbor in $P$.
\end{itemize}

\begin{lemma}\label{wa1}
Let $G$ be an ISK4-free graph. 
Let $P$ be an appendix of a wheel $(H,x)$ of $G$, and let $H'_P$ be
a sector of $H$ w.r.t. $P$. Then $H'_P$ contains at least three 
neighbors of $x$.
In particular, $H'_P$ contains at least two sectors of $(H,x)$.
\end{lemma}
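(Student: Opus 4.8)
The plan is to argue by contradiction and exhibit an ISK4. Suppose $H'_P$ contains at most two neighbors of $x$. By condition (ii) of the definition of an appendix of a wheel, $H'_P$ properly contains some sector $S_i$ of $(H,x)$; since the endvertices $x_i,x_{i+1}$ of $S_i$ are neighbors of $x$ lying on $H'_P$, the two neighbors of $x$ on $H'_P$ are \emph{exactly} $x_i$ and $x_{i+1}$. In particular neither endvertex $u_1,u_2$ of $H'_P$ (the attachment of $P$) is a neighbor of $x$, for otherwise it would be a third one.

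The idea is to replace the other arc $H''_P$ of $H$ by the appendix to obtain a shorter hole. Since $P$ is an appendix of $H$, its interior has no neighbor on $H$ and $u_1u_2$ is not an edge, so $H_1:=G[V(H'_P)\cup V(P)]$ is a hole, on which $x$ has exactly the neighbors $x_i,x_{i+1}$ together with, by condition (iii), at most one neighbor on $P$. I would then produce three paths from $x$ to three distinct vertices of $H_1$, pairwise internally disjoint and with interiors disjoint from $H_1$: such a configuration is a subdivision of $K_4$ with branch vertices $x$ and the three landing vertices, and provided it has no chords it is an ISK4, contradicting the hypothesis. The first two paths are always the spokes $xx_i$ and $xx_{i+1}$, and the three hole-arcs of $H_1$ among the landing vertices supply the remaining edges of the $K_4$.

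The third path is where the argument splits. If $x$ has a neighbor $p$ on $P$, then the edge $xp$ is the desired third path and $(H_1,x)$ is simply a $3$-wheel. The harder case is when $x$ has no neighbor on $P$: here I use that, by condition (ii) applied to $H''_P$, the arc $H''_P$ properly contains a sector, hence carries at least two neighbors of $x$, all of them interior to $H''_P$ (since $u_1,u_2\notin N(x)$). Choosing $u^\star\in\{u_1,u_2\}$ and letting $x_a$ be the neighbor of $x$ on $H''_P$ closest to $u^\star$, the path $Q$ running $x,x_a$ and then along $H''_P$ to $u^\star$ reaches $H_1$ at the third vertex $u^\star$. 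I expect the main obstacle to be checking that the resulting $K_4$-subdivision is induced, which reduces to three points: that $x$ sees no vertex of $H_1$ beyond $x_i,x_{i+1}$ and no interior vertex of $Q$ beyond $x_a$ (guaranteed by the choice of $x_a$ as the closest neighbor); that the interior of $Q$, lying on $H''_P$, is nonadjacent to the rest of $H_1$ (guaranteed since $H$ is chordless, so $H'_P$ and $P$ meet $H''_P$ only at $u_1,u_2$); and that the opposite attachment, which is an interior vertex of the hole-arc joining $x_{i+1}$ to $u^\star$ through $P$, keeps degree two (its unused $H''_P$-edge leads outside the constructed subgraph, as $H''_P$ has length at least three).

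Finally, the ``in particular'' statement is immediate: the at least three neighbors of $x$ lying on the arc $H'_P$ occur consecutively along $H$, say $x_a,x_{a+1},x_{a+2}$, so $H'_P$ contains the two sectors $S_a$ and $S_{a+1}$ of $(H,x)$.
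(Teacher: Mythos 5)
Your overall strategy is the same as the paper's: assume $H'_P$ contains exactly two neighbors $x_i,x_{i+1}$ of $x$, form the hole $H_1$ induced by $H'_P\cup P$, dispose of the case where $x$ has a neighbor in $P$ (then $(H_1,x)$ is a 3-wheel, hence an ISK4), and otherwise route a third path from $x$ to $H_1$ through $H''_P$. But there is a genuine gap at the step ``in particular neither endvertex $u_1,u_2$ of $H'_P$ is a neighbor of $x$, for otherwise it would be a third one.'' This does not follow: ``properly contains'' means only $S_i\subsetneq H'_P$, so $S_i$ may share one endpoint with $H'_P$, i.e.\ $u_1=x_i$ is allowed, and then $u_1$ is adjacent to $x$ without being a third neighbor --- it is one of the two you already counted. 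The paper is careful on exactly this point: it asserts only that \emph{w.l.o.g.} $x$ is not adjacent to $u_2$, and in the proof of its Lemma on appendices of proper wheels this degenerate situation actually occurs (there the attachment vertices end up being $y_1=x_1$ and $y_2=x_l$, both adjacent to $x$). So your claim is not just unproved; it is false in general.

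The error propagates through your case where $x$ has no neighbor in $P$. First, you choose $u^\star\in\{u_1,u_2\}$ arbitrarily; if $u^\star$ is adjacent to $x$, then $x_a$ (the neighbor of $x$ on $H''_P$ closest to $u^\star$) is $u^\star$ itself, and your third path collapses onto an existing branch vertex, so no $K_4$-subdivision is produced. Second, your assertion that $H''_P$ carries at least two neighbors of $x$, ``all of them interior,'' also rests on the false claim: the sector contained in $H''_P$ may have an endpoint at $u_1$ or $u_2$, leaving possibly a single interior neighbor. Third, the check that the opposite attachment $u^{**}$ keeps degree two requires precisely that $x_au^{**}$ be a non-edge, and your justification (``$H''_P$ has length at least three'') does not give this: $H''_P$ could be $u^{**},x_a,w,u^\star$. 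All three problems are repaired the way the paper does it: first observe that $x$ has $n\geq 4$ neighbors on $H$, since otherwise $H\cup\{x\}$ is itself an ISK4 (an observation absent from your write-up); then choose $u^\star$ to be an endvertex of $H'_P$ \emph{not} adjacent to $x$, which exists because $S_i\subsetneq H'_P$; then the $n-2\geq 2$ neighbors of $x$ interior to $H''_P$ force a further neighbor of $x$ strictly between $x_a$ and $u^{**}$, so $x_a$ is distinct from and non-adjacent to $u^{**}$, and your construction goes through.
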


\begin{proof}
Let $\{ u_1,u_2 \}$ be the attachment of $P$ to $H$. Since $P$ is an
appendix of $(H,x)$, $H'_P$ contains at least two neighbors of $x$.
Suppose $H'_P$ contains exactly two neighbors of $x$. If $x$ has a neighbor 
in $P$, then $H'_P \cup P \cup \{ x \}$ induces an ISK4. So $x$ does not
have a neighbor in $P$. Since $P$ is an appendix of $(H,x)$, $H'_P$
properly contains a sector of $(H,x)$ and so w.l.o.g. $x$ is not adjacent to
$u_2$. Let $H''_P$ be the other sector of $H$ w.r.t. $P$, and let $x'$
be the neighbor of $x$ in $H''_P$ that is closest to $u_2$. Note that
since $H \cup \{ x \}$ cannot induce an ISK4, $n \geq 4$, and hence
$x' \neq u_1$ and $x'u_1$ is not an edge. Let $H'$ be the $x'u_2$-subpath
of $H''_P$. Then $H'_P \cup H' \cup P \cup \{ x \}$ induces an ISK4.
\end{proof}

A wheel  $(H, x)$ of $G$ is {\em proper} if vertices 
$u \in G \sm (H \cup N[x])$ are  one
of the following types:

  \begin{itemize}
  \item
    type~0: $|N(u) \cap H|=0$; 
  \item
    type~1: $|N(u) \cap H|=1$; 
  \item
    type~2: $|N(u) \cap H|=2$ and for some sector
$S_i$ of $(H,x)$,  
$N(u) \cap H \subseteq S_i$. 
  \end{itemize}

\begin{lemma}\label{w1}
Let $G$ be a \{triangle, ISK4\}-free graph.
If $(H,x)$ is a wheel of $G$ with fewest number of vertices, then
$(H,x)$ is a proper wheel.
\end{lemma}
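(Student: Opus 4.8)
The plan is to argue by contradiction: assume $(H,x)$ is a wheel of $G$ with the fewest vertices but is not proper, so there is a vertex $u\in G\sm(H\cup N[x])$ that is neither of type~0, 1 nor~2. Thus either $|N(u)\cap H|\ge 3$, or $|N(u)\cap H|=2$ with the two neighbors lying in no common sector of $(H,x)$. First I would record the easy structural facts forced by triangle-freeness: the neighbors of $u$ on $H$ are pairwise non-adjacent, consecutive attachments $x_i,x_{i+1}$ of $x$ are non-adjacent (so every sector $S_i$ has at least one interior vertex), and $n\ge 4$ --- for a $3$-wheel in a triangle-free graph is itself an induced subdivision of $K_4$ (branch vertices $x,x_1,x_2,x_3$, with the three sectors serving as the subdivided edges $x_1x_2$, $x_2x_3$, $x_3x_1$), contradicting that $G$ is ISK4-free.

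The core of the argument is a rerouting dichotomy. Whenever $u$ has two neighbors $a,b$ on $H$ lying in distinct sectors, the two $ab$-arcs $A_1,A_2$ of $H$ each carry at least one attachment of $x$, and replacing one arc by the detour $a\tp u\tp b$ produces a hole $C_t=A_t\cup\{u\}$ (chordless as soon as $u$ has no neighbor in the interior of $A_t$). Since $x\notin C_t$ and $x$ is non-adjacent to $u$, if the retained arc still carries at least three attachments of $x$ then $(C_t,x)$ is again a wheel; a short count, using that the discarded arc contains an attachment of $x$ interior to a sector of length $\ge 2$, shows $|V(C_t)|<|V(H)|$, contradicting minimality. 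Here Lemma~\ref{wa1} is the natural tool: when $u$ (or the single-vertex path it forms) is an appendix of the wheel $(H,x)$, each sector of $H$ with respect to it carries at least three attachments of $x$, which is exactly what makes the rerouted wheel both legitimate and strictly smaller. The reduction to two neighbors in distinct sectors is handled first: three neighbors of $u$ inside a single sector already yield an ISK4 (the hole through $u$ and two of them, closed the long way around $H$), and otherwise two neighbors in distinct sectors can be selected, consecutive in the cyclic order of $N(u)\cap H$ so that one arc is free of further attachments of $u$ and the detour hole is chordless.

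The hard part is the degenerate configurations in which rerouting fails --- either both arcs carry exactly two attachments of $x$ (the even split, typically $n=4$ with $a,b$ in opposite sectors), or the discarded arc is so short (a single attachment of $x$ adjacent to both $a$ and $b$) that the new wheel is no smaller. In these cases minimality gives nothing, and the contradiction must instead come from exhibiting an induced subdivision of $K_4$ directly. The delicate point is that the hub $x$ is adjacent to every $x_i$, so any path routed through $x$ risks creating a chord; the construction I would use takes as branch vertices $x$, two consecutive attachments such as $x_j,x_{j+1}$, and one attachment of $u$, joining the two ``far'' branch vertices through $u$ by a path that runs the long way around $H$ and through $u$ (for instance $x_3\tp s_3\tp u\tp s_1$ in the smallest case), thereby keeping $x$ of degree exactly three and leaving no chord. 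Verifying that such a routing is always available, and always induced, is the main obstacle, and it is where the absence of triangles and the careful choice of the two attachments $a,b$ of $u$ are used in full; once this is done, the ISK4 contradicts the hypothesis and completes the proof.
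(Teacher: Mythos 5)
Your high-level strategy is the same as the paper's: in its proof, claim (1) rules out three neighbors of $u$ in a single sector via a smaller wheel ($S_i \cup \{x,u\}$), and claim (2) handles neighbors in distinct sectors by either producing a smaller wheel (when the arc $P$ between the attachments carries at least three neighbors of $x$ --- your ``rerouting dichotomy'') or exhibiting an explicit ISK4. However, your proposal has a genuine gap, and you name it yourself: ``Verifying that such a routing is always available, and always induced, is the main obstacle.'' That obstacle \emph{is} the proof. The degenerate configurations where rerouting fails (adjacent sectors, i.e.\ $j=2$; the retained arc carrying only two neighbors of $x$, which forces $j=3$; attachments of $u$ adjacent to or coinciding with spokes $x_j$) are exactly where the paper does its work, and they are not dispatched by one uniform construction: the paper needs a careful extremal choice of the two sectors (minimizing $|j-i|$, which guarantees $u$ has no neighbor in the interior of the connecting arc $P$) and then several distinct ISK4s depending on adjacencies --- $S_1 \cup \{x,u,x_j\}$ when $u_{j+1}=x_j$, $S_1 \cup \{x,u,u_{j+1}\}$ when $u_{j+1}x_2 \in E(G)$, $S_1 \cup S_2 \cup \{x,u\}$ when $j=2$, and $S_1 \cup S_2 \cup \{x,u\} \cup x_3S_3u_j$ when $j=3$, among others. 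Your single proposed routing (two consecutive spokes, one attachment of $u$, ``the long way around $H$ through $u$'') is not always induced, precisely because the long arc may contain further neighbors of $u$ or spokes of $x$ creating chords; checking when it is, and substituting alternatives when it is not, is the unfinished case analysis.

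Two further inaccuracies, both fixable but worth flagging. First, your parenthetical claim that three neighbors of $u$ inside one sector ``already yield an ISK4 (the hole through $u$ and two of them, closed the long way around $H$)'' is wrong when $u$ has four or more neighbors on $H$: the long arc then contains another neighbor of $u$, so the resulting subdivision of $K_4$ is not induced. The paper's argument (a wheel with hole $S_i \cup \{x\}$ and center $u$, smaller than $(H,x)$) is the correct fix. Second, your appeal to Lemma~\ref{wa1} only applies when $u$ is an appendix of the \emph{wheel} $(H,x)$, which requires each sector of $H$ w.r.t.\ $u$ to properly contain a sector of $(H,x)$; in the central hard cases (attachments in adjacent or nearly adjacent sectors) this hypothesis fails, so the lemma gives you nothing there --- another place where the deferred analysis cannot be avoided.
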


\begin{proof}
Let $u \in G \sm (H \cup N[x])$. It follows from the following two
claims that $u$ is of type 0, 1 or 2 w.r.t. $(H,x)$, and hence that
$(H,x)$ is proper.

  \begin{claim}
    \label{w1-1}
For every sector $S_i$ of $(H,x)$, $|N(u) \cap S_i|\leq 2$.
  \end{claim}
  
  \begin{proofclaim}
Otherwise, $S_i \cup \{ x,u \}$ induces a wheel with fewer vertices than
$(H,x)$, a contradiction.
  \end{proofclaim}

  \begin{claim}
    \label{w1-2}
    For some sector $S_i$ of $(H,x)$, $N(u) \cap H \subseteq N(u) \cap S_i$.
  \end{claim}
  
  \begin{proofclaim}
    Assume otherwise, and choose $i,j \in \{ 1, \ldots ,n \}$ so that
    $u$ has a neighbor in $S_i \sm S_j$ and in $S_j \sm S_i$,
    $N(u)\cap (S_i\cup S_j)$ is not contained in a sector of $(H,x)$
    and $|j-i|$ is minimized.  W.l.o.g. $i=1$ and $1<j<n$ (since the
    case when $j=n$ is symmetric to the case when $j=2$).  Let $u'$
    (resp. $u''$) be the neighbor of $u$ in $S_1$ that is closest to
    $x_1$ (resp. $x_2$).  Let $u_j$ (resp. $u_{j+1}$) be the neighbor
    of $u$ in $S_j$ that is closest to $x_j$ (resp. $x_{j+1}$).  Let
    $P$ be the $u''u_j$-subpath of $H$ that contains $x_2$. Note that
    by the choice of $i,j$, vertex $u$ has no neighbor in the interior
    of $P$ and $u_j \neq x_n$. Since $G$ is triangle-free,
    $x_{j+1}x_1$ is not an edge, and if $j \neq 2$ then $x_2x_j$ is
    not an edge.

First suppose that $u$ has at least two neighbors in $S_1$. 
Then by (\ref{w1-1}), $u$ has exactly two neighbors in $S_1$.
If $u_{j+1}=x_j$ 
then $j \neq 2$ (by the choice of $i,j$), and hence $S_1 \cup \{ x,u,x_j\}$
induces an ISK4. 
So $u_{j+1} \neq x_j$. If $u_{j+1}x_2$ is an edge, then 
$ux_2$ is not an edge (since $G$ is triangle-free) and hence
$S_1 \cup \{ x,u,u_{j+1}\}$ induces an ISK4. 
So $u_{j+1}x_2$ is not an edge.
But then $S_1 \cup \{ x,u \}$ together with $u_{j+1}S_jx_{j+1}$ induces an
ISK4.

Therefore $u$ has exactly one neighbor in $S_1$, and by symmetry it has
exactly one neighbor in $S_j$. If $j=2$ then $S_1 \cup S_2 \cup \{ x,u \}$
induces an ISK4. 
So $j>2$. If $P$ contains at least three neighbors of $x$, then (since 
$j<n$ and $u_j \neq x_n$) $P \cup \{ x,u \}$ induces a wheel with center $x$
that has fewer vertices than $(H,x)$, a contradiction. Therefore $P$
contains exactly two neighbors of $x$. But then $j=3,u_j \neq x_4$ and hence
$S_1 \cup S_2 \cup \{ x,u \}$ together with $x_3S_3u_j$ induces an ISK4.
  \end{proofclaim}

\end{proof}

\begin{lemma}\label{w2}
  Let $G$ be a \{triangle, ISK4, $K_{3, 3}$\}-free graph.  Let $(H,x)$
  be a proper wheel of $G$ with fewest number of spokes. If $(H,x)$
  has an appendix, then $(H,x)$ is a 4-wheel.
\end{lemma}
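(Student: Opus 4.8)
The plan is to suppose $(H,x)$ has $n\ge 5$ spokes and to use the appendix $P$ to build a wheel with the same centre $x$ but strictly fewer spokes that is still proper, contradicting the minimality of $(H,x)$; the arithmetic will show that the only configuration escaping this reduction has exactly four spokes.

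First I would fix notation. Let $\{u_1,u_2\}$ be the attachment of $P$ to $H$ (so $u_1u_2$ is not an edge), let $H_P^1,H_P^2$ be the two sectors of $H$ w.r.t. $P$, and write $P'$ for the chordless $u_1u_2$-path $u_1p_1\dots p_k u_2$ obtained by attaching $P$ at its ends. Since $P$ is an appendix, the interior of $P$ has no neighbour in $H$, so each $H^i := H_P^i \cup P'$ is a hole. By Lemma~\ref{wa1}, setting $s_i := |N(x)\cap H_P^i|$ we have $s_1,s_2 \ge 3$, so each $(H^i,x)$ is again a wheel. Writing $t := |N(x)\cap\{u_1,u_2\}| \le 2$ and $\varepsilon := |N(x)\cap V(P)| \le 1$ (the latter by condition~(iii) in the definition of an appendix of a wheel), the spoke counts satisfy $n = s_1+s_2-t$ and $|N(x)\cap H^i| = s_i + \varepsilon$. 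In particular $n \ge 3+3-2 = 4$.

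The counting step is then immediate: if neither $(H^1,x)$ nor $(H^2,x)$ has fewer spokes than $(H,x)$, then $s_i+\varepsilon \ge n$ for both $i$, i.e. $\varepsilon + t \ge \max(s_1,s_2) \ge 3$, which forces $\varepsilon=1$, $t=2$ and $s_1=s_2=3$, hence $n=4$. So as soon as $n \ge 5$, one of the reduced wheels, say $(H^1,x)$, has strictly fewer spokes than $(H,x)$.

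The main obstacle is to show that this reduced wheel is proper, for only then does its smaller spoke count contradict the minimality of $(H,x)$. I would prove this vertex by vertex. Fix $u \in G \sm (H^1 \cup N[x])$; since $(H,x)$ is proper and $H_P^1 \subseteq H$, we already know $|N(u)\cap H_P^1| \le 2$, with the two neighbours (if any) lying in a common sector of $(H,x)$. The only way $u$ can fail to be of type~$0$, $1$ or $2$ with respect to $(H^1,x)$ is therefore through its neighbours on the new piece $P'$, that is, on $P$. The heart of the argument is to control these attachments: a vertex attaching badly to the chordless path $P$ while also attaching to $H_P^1$ should produce an induced subdivision of $K_4$ together with $x$ and suitable subpaths of $H^1$, contradicting ISK4-freeness (this is exactly the kind of local analysis carried out in Lemmas~\ref{ca} and~\ref{ca2}, now applied to $P$ viewed inside $H^1$). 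I expect this attachment analysis, rather than the counting, to be the technical core. Once it is done, $(H^1,x)$ is a proper wheel with fewer spokes than $(H,x)$, the desired contradiction, so $n \le 4$, and with $n\ge 4$ we conclude $n=4$.
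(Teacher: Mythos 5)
Your counting step is correct, and in fact cleaner than the paper's closing argument: with $s_i=|N(x)\cap H_P^i|\ge 3$ (Lemma~\ref{wa1}), $t=|N(x)\cap\{u_1,u_2\}|$ and $\varepsilon=|N(x)\cap V(P)|$, the identities $n=s_1+s_2-t$ and $|N(x)\cap H^i|=s_i+\varepsilon$ do force $n=4$ once both reduced wheels have at least $n$ spokes. The genuine gap is the step you defer: properness of the reduced wheel is not a verification that follows from ``the kind of local analysis in Lemmas~\ref{ca} and~\ref{ca2}''; it is the entire content of the lemma, and it cannot be established for the reduced wheel of an \emph{arbitrary} appendix by those lemmas. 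The paper first makes an extremal choice of $P$ (prefer an appendix one of whose sectors equals $S_i\cup S_{i+1}$; otherwise choose $P$ so that no appendix $Q$ of $(H,x)$ has a sector of $H$ properly contained in the chosen sector $H'_P$; and, subject to this, minimize $|V(P)|$), and its properness proof settles several violator configurations not by exhibiting an ISK4 but by contradicting this choice. For example, a violator $u$ with a neighbour in the end piece $S'_n\setminus y_1$ and a neighbour in $P$ yields, via Lemma~\ref{ca2}, a second appendix whose sector nests strictly inside $H'_P$ (contradiction with the second condition); a violator $u$ whose neighbours all lie in $P$ yields a $y_1y_2$-path through $u$ that minimality forbids from being shorter than $P$, and only after that does an ISK4 appear. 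These configurations are not excluded by \{triangle, ISK4, $K_{3,3}$\}-freeness alone: if $u$ is adjacent to the attachment vertex $u_1$ and to one vertex of $P$, or to two non-adjacent vertices of $P$ and nothing else, then $H\cup P\cup\{u\}$ is series-parallel, so no ISK4 lives there; ruling such vertices out (or showing they are harmless) requires bringing in $x$ and a delicate analysis, or replacing $P$ by a better appendix --- which is exactly the iteration that the paper's extremal choice encodes once and for all.

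There is also a structural mismatch in how you plan to finish. Your count produces a wheel with fewer spokes on \emph{whichever} side happens to satisfy $s_i+\varepsilon<n$, so you would need properness for both reduced wheels; the paper only ever proves properness for the wheel built on the specially chosen sector $H'_P$, and then derives $n=4$ from the minimality of $(H,x)$ together with the choice of $P$ (forcing $y_1=x_1$, $y_2=x_l$, $l=n-1$, and then $l=3$), not from a two-sided count. So your skeleton (reduce, then count) is sound, but to complete it you must either carry out a full attachment analysis for an arbitrary appendix on an arbitrary side --- substantially more case work than Lemmas~\ref{ca} and~\ref{ca2} supply, and not obviously true in the form you need --- or import the paper's extremal choice of $P$, at which point you are reconstructing the paper's proof rather than giving an alternative to it.
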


\begin{proof}
Assume $(H,x)$ has an appendix. Note that by Lemma \ref{wa1}, if $P$ is
an appendix of $(H,x)$ then each of the sectors of $H$ w.r.t. $P$ contains at 
least two sectors of $(H,x)$. If $(H,x)$ has an appendix such that one of
the sectors of $H$ w.r.t. this appendix is $S_i \cup S_{i+1}$ for some
$i \in \{ 1, \ldots ,n \}$, then let $P$ be such an appendix and
$H'_P=S_i \cup S_{i+1}$. Otherwise, let $P$ be an appendix of $(H,x)$
such that for a sector $H'_P$ of $H$ w.r.t. $P$, there is no appendix
$Q$ of $(H,x)$ such that $H'_P$ properly contains a sector of $H$ w.r.t.
$Q$. Assume further that such a $P$ is chosen so that $|V(P)|$ is
minimized. Let $H''_P$ be the other sector of $H$ w.r.t. $P$. W.l.o.g.
we may assume that $H'_P$ contains $x_1, \ldots ,x_l$, $l \geq 3$, and
does not contain $x_{l+1}, \ldots ,x_n$. Let $\{ y_1,y_2 \}$ be the
attachment of $P$ to $H$ such that $y_1 \in S_n \sm x_n$ and $y_2 \in
S_l \sm x_{l+1}$.
Let $P=p_1 \ldots p_k$ and w.l.o.g. assume that $p_1$ is adjacent to $y_1$,
and $p_k$ to $y_2$. Let $S'_n$ be the $x_1y_1$-subpath of $S_n$, and 
$S'_l$ the $x_ly_2$-subpath of $S_l$.
Let $H'$ be the hole induced by $H'_P\cup P$. Note that since $l \geq 3$,
$(H',x)$ is a wheel.

  \begin{claim}
    \label{w2-1}
$(H',x)$ is a proper wheel.
  \end{claim}
  
  \begin{proofclaim}
Let $u \in G \sm (H' \cup N[x])$ and assume that $u$ is not of type 0, 1 or 2
w.r.t. $(H',x)$. 
Note that $u \not \in H$.
Since $(H,x)$ is a proper wheel, $u$ must have a neighbor
in $P$. Let $P'=y_1Py_2$.
Let $u_1$ (resp. $u_2$) be the neighbor of $u$ in $P$ that is closest to
$y_1$ (resp. $y_2$). Note that sectors $S_1, \ldots ,S_{l-1}$ of
$(H,x)$ are also sectors of $(H',x)$.

First suppose that $N(u) \cap H' \subseteq S'_n \cup S'_l \cup P$.
Since $(H,x)$ is proper, $u$ cannot have a neighbor in both $S'_n$ and $S'_l$.
If $u$ has a neighbor in $S'_n \sm y_1$, then 
by Lemma~\ref{ca2} and since $u$ has a neighbor in $P$, $P \cup u$
contains an appendix $Q$ of $(H,x)$ such that a sector of $H$ w.r.t. $Q$
is properly contained in $H'_P$, contradicting our choice of $P$. So
$u$ does not have a neighbor in
$S'_n \sm y_1$, and by symmetry it does not have a neighbor in
$S'_l \sm y_2$.
Since $u$ is not of type 0, 1 or 2 w.r.t. $(H',x)$, $u_1 \neq u_2$, and 
since $G$ is triangle-free, $u_1u_2$ is not an edge. Let $P''$ be the
chordless path from $y_1$ to $y_2$ in $P \cup u$ that contains $u$.
Since the length of $P''$ cannot be less than the length of $P$, by the choice
of $P$, there is  a vertex $p \in P$ such that $u_1p$ and $u_2p$ are edges.
Since $G$ is triangle-free, $u$ is not adjacent to $p$, and hence $u$ has
exactly two neighbors in $P'$. Since $u$ is not of type 2 w.r.t. $(H',x)$,
$x$ must be adjacent to $p$. But then $S'_n \cup S'_l \cup P \cup \{ x,u \}$
induces an ISK4.

Therefore $u$ must have a neighbor in 
$(S_1 \cup \ldots \cup S_{l-1}) \sm \{ x_1,x_l \}$. Since $(H,x)$ is
proper, for some $i \in \{ 1, \ldots ,l-1 \}$, $N(u) \cap H \subseteq S_i$.
Suppose that $u$ is of type 2 w.r.t. $(H,x)$. W.l.o.g. $u$ is not adjacent
to $x_1$. Let $p_j$ be the vertex of $P$ with lowest index adjacent to $u$.
If $j=k$ then, since $G$ is triangle-free, not both $p_k$ and $u$ can be 
adjacent to $x_l$, and hence $H \cup \{ u,p_k \}$ induces an ISK4.
So $j < k$, and hence $H \cup \{ u,p_1, \ldots ,p_j \}$ induces an ISK4.
Therefore $u$ is of type 1 w.r.t. $(H,x)$.
Recall that by our assumption that $u$ has a neighbor in
$(S_1 \cup \ldots \cup S_{l-1}) \sm \{ x_1,x_l \}$, $u$ is not adjacent to 
$x_1$ nor $x_l$. But then $H \cup P \cup \{ u \}$ contains an ISK4.
  \end{proofclaim}

So $(H',x)$ is a proper wheel. Since it cannot have fewer sectors than $(H,x)$
and by Lemma \ref{wa1},
it follows that $y_1=x_1$, $y_2=x_l$ and $l=n-1$. But then by the choice of
$P$, $l=3$, and hence $(H,x)$ is a 4-wheel.
\end{proof}

A {\em short connection} between sectors $S_i$ and $S_{i+1}$ of $(H,x)$
is a chordless path $P=p_1 \ldots p_k$, $k>1$, in 
$G \setminus (H \cup N[x])$ such that
the following hold:
\begin{itemize}
\item[(i)] 
$N(p_1) \cap (H \setminus \{ x_{i+1}\})=\{ u_1\}$, $u_1 \in S_i
\setminus \{ x_{i+1} \}$,
\item[(ii)] $N(p_k) \cap (H \setminus \{ x_{i+1}\})=\{ u_2\}$, $u_2 \in S_{i+1}
\setminus \{ x_{i+1} \}$, and
\item[(iii)] the only vertex of $H$ that may have a neighbor in
$P \setminus \{ p_1,p_k \}$ is $x_{i+1}$.
\end{itemize}

\begin{lemma}\label{w3}
Let $G$ be a \{triangle, ISK4, $K_{3,3}$\}-free graph.
Let $(H,x)$ be a proper wheel of $G$ with fewest number of spokes.
 Then $(H,x)$ has no short connection.
\end{lemma}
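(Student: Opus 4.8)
The plan is to suppose, for contradiction, that $(H,x)$ has a short connection $P = p_1 \ldots p_k$ between consecutive sectors $S_i$ and $S_{i+1}$, with the attachment $u_1 \in S_i \setminus \{x_{i+1}\}$ and $u_2 \in S_{i+1} \setminus \{x_{i+1}\}$ as in the definition. The overall strategy is to combine $P$ with suitable subpaths of $H$ (together with $x$ and possibly $x_{i+1}$) to build either an ISK4, a $K_{3,3}$, or a proper wheel with strictly fewer spokes than $(H,x)$, each of which contradicts either the hypotheses on $G$ or the minimality of $(H,x)$. The starting observation is that since $u_1 \neq x_{i+1} \neq u_2$ and $u_1, u_2$ lie in different sectors, the path $P$ is ``almost'' an appendix of $H$, except that its interior may see $x_{i+1}$; so the first thing I would do is split the analysis according to whether $x_{i+1}$ has a neighbor in the interior of $P$.

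\medskip

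\noindent\textbf{Case 1: $x_{i+1}$ has no neighbor in the interior of $P$.} Then $P$ is an appendix of $H$ in the sense of the earlier definition, with attachment $\{u_1,u_2\}$. First I would verify it is in fact an appendix of the \emph{wheel} $(H,x)$: condition (iii) (that $x$ has at most one neighbor on $P$) should follow because $P$ lives in $G \setminus (H \cup N[x])$, so $x$ has \emph{no} neighbor on $P$; and condition (ii), that each sector of $H$ w.r.t.\ $P$ properly contains a sector of $(H,x)$, should follow from the placement of $u_1, u_2$ strictly inside $S_i$ and $S_{i+1}$ (so that each $u_1u_2$-subpath of $H$ contains at least one of $x_i, x_{i+1}, x_{i+2}$ as an interior vertex adjacent to $x$). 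Once $P$ is an appendix of $(H,x)$, Lemma~\ref{wa1} forces each sector of $H$ w.r.t.\ $P$ to contain at least three neighbors of $x$; but the sector running ``the short way'' from $u_1$ to $u_2$ through $x_{i+1}$ contains only the neighbors $x_i, x_{i+1}, x_{i+2}$ at most, and in fact its interior near $x_{i+1}$ is too small, giving the contradiction. I would expect this case to reduce quickly to a count of neighbors of $x$ in the short sector.

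\medskip

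\noindent\textbf{Case 2: $x_{i+1}$ has a neighbor on the interior of $P$.} This is the main obstacle. Here $P \cup \{x_{i+1}\}$ is no longer a clean appendix, so the plan is to route a new chordless path. Let $x_{i+1}$ attach to $P$, and consider the chordless $u_1u_2$-path, call it $P'$, inside $G[V(P) \cup \{u_1, u_2\}]$, and then examine the graph induced by $P'$ together with $x_{i+1}$ and $x$. The idea is that $x_{i+1}$ together with its neighbor(s) on $P$ creates a ``branch'', and one can then find a subdivision of $K_4$ using $x$, $x_{i+1}$, and two branch vertices on the cycle $H' = S_i \cup S_{i+1} \cup \text{(part of }P)$. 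Because $G$ is triangle-free, $x_{i+1}$ cannot be adjacent to $x$-neighbors on both sides simultaneously, which controls the adjacencies; and because $(H,x)$ has fewest spokes, any wheel I exhibit with center $x$ on the smaller hole $H'$ must have at least $n$ spokes, which is impossible if $H'$ omits $x_i$ or $x_{i+2}$ from the rim. I expect the delicate point to be showing that the new cycle is genuinely \emph{induced} (no unexpected chords from $x_{i+1}$ to interior vertices of $S_i$ or $S_{i+1}$), which is where conditions (i) and (ii) of the short-connection definition — pinning $N(p_1)$ and $N(p_k)$ to single vertices \emph{off} $x_{i+1}$ — do the essential work, and where triangle-freeness repeatedly rules out the problematic chords.
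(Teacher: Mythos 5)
Your proposal has a genuine gap in each of its two cases. In Case 1, your claim that $P$ is an appendix of the wheel $(H,x)$ rests on a misreading of condition (ii) of that definition: a sector of $H$ w.r.t.\ $P$ must properly contain an \emph{entire} sector of $(H,x)$ (a subpath of $H$ between two consecutive neighbors of $x$), not merely contain some neighbor of $x$ in its interior. When $u_1$ and $u_2$ lie strictly inside $S_i$ and $S_{i+1}$, the short sector of $H$ w.r.t.\ $P$ (the one through $x_{i+1}$) contains no complete sector of $(H,x)$, so $P$ is not an appendix of the wheel and Lemma~\ref{wa1} cannot be invoked at all; and in the extreme case $u_1=x_i$, $u_2=x_{i+2}$, where $P$ genuinely is a wheel appendix, the short sector contains exactly three neighbors of $x$, which \emph{satisfies} rather than violates the conclusion of Lemma~\ref{wa1}, so again no contradiction follows. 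What does work here (and is what the paper does) is to exhibit an ISK4 directly: $S_i\cup S_{i+1}\cup P\cup\{x\}$ induces a subdivision of $K_4$ with branch vertices $x$, $x_{i+1}$, $u_1$, $u_2$. Note also that your Case 1 silently includes the sub-cases where $x_{i+1}$ is adjacent to $p_1$ or $p_k$ (the definition of short connection permits this), in which $P$ is not even an appendix of $H$; these require the separate ISK4 constructions that the paper gives when proving that $x_{i+1}$ must have a neighbor in the interior of $P$.

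In Case 2, which is the heart of the lemma, the shortcut you propose is invalid and nothing concrete replaces it: the minimality of $(H,x)$ is among \emph{proper} wheels only, so exhibiting a wheel with center $x$ and fewer spokes is not by itself a contradiction. The paper forms the hole $H'$ obtained from $H$ by substituting $P$ for the $u_1u_2$-subpath of $H$ through $x_{i+1}$, so that $(H',x)$ is a wheel with $n-1$ spokes, and then spends essentially the whole proof verifying that $(H',x)$ is \emph{proper}; only after that does the spoke count contradict minimality. That verification needs machinery you never set up: a further minimization of $|V(P)|$ over all proper wheels with fewest spokes and all their short connections (used to rule out shorter short connections created by vertices attaching to $P$), the preliminary facts that $n\geq 5$ and that $(H,x)$ has no appendix (which follows from Lemma~\ref{w2}), and Lemma~\ref{ca2}. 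Without this properness verification, or some substitute for it, your Case 2 does not close; with it, your argument would essentially become the paper's proof.
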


\begin{proof}
Suppose $(H,x)$ has a short connection $P=p_1 \ldots p_k$. 
Assume that $(H,x)$ and $P$ are chosen so that $|V(P)|$ is minimized.
W.l.o.g. $p_1$ is adjacent to $u_1 \in S_1 \sm x_2$ and $p_k$ to
$u_2 \in S_2 \sm x_2$. Let $S_1'$ be the $u_1x_1$-subpath of $S_1$, and let
$S_2'$ be the $u_2x_3$-subpath of $S_2$. Let $S_P$ be the $u_1u_2$-subpath
of $H$ that contains $x_2$. Let $H'$ be the hole induced by
$(H \sm S_P) \cup P \cup \{ u_1,u_2 \}$.

  \begin{claim}
    \label{w3-1}
$n \geq 5$
  \end{claim}
  
  \begin{proofclaim}
If $n=3$ then $H \cup \{ x \}$ induces an ISK4.
If $n=4$ then $H' \cup \{ x \}$ induces an ISK4.
Therefore, $n \geq 5$.
  \end{proofclaim}

  \begin{claim}
    \label{w3-2}
$(H,x)$ has no appendix.
  \end{claim}
  
  \begin{proofclaim}
Follows from (\ref{w3-1}) and Lemma \ref{w2}.
  \end{proofclaim}

  \begin{claim}
    \label{w3-3}
Vertex $x_2$ has a neighbor in $P \sm \{ p_1,p_k \}$.
  \end{claim}
  
  \begin{proofclaim}
    Assume not. If $x_2$ has no neighbor in $P$, then $S_1 \cup S_2
    \cup P \cup \{x\}$ induces an ISK4. So w.l.o.g. $x_2$ is adjacent
    to $p_1$. Then $u_1x_2$ is not an edge, since $G$ is
    triangle-free. If $x_2$ is not adjacent to $p_k$, then $S'_1 \cup
    S_2 \cup P \cup \{x\}$ induces an ISK4. So $x_2$ is adjacent to $p_k$, and
    hence $u_2x_2$ is not an edge. But then $S'_1 \cup S'_2 \cup P
    \cup \{ x,x_2\}$ induces an ISK4.
  \end{proofclaim}

  \begin{claim}
    \label{w3-4}
$(H',x)$ is a proper wheel.
  \end{claim}
  
  \begin{proofclaim}
By (\ref{w3-1}) $(H',x)$ is a wheel. Assume it is not proper 
and let $u \in G \sm (H' \cup N[x])$ be such that it is not of
type 0, 1 or 2 w.r.t. $(H',x)$. Note that $u \not \in H$, and hence
$u$ is of type 0, 1 or 2 w.r.t. $(H,x)$. It follows that $u$ must have
a neighbor in $P$. Let $p_i$ (resp. $p_j$) be the vertex of $P$ with
lowest (resp. highest) index adjacent to $u$.

First suppose that $N(u) \cap H' \subseteq S'_1 \cup S'_2 \cup P$.
Then $|N(u) \cap H'| \geq 3$. If $u$ does not have a neighbor in
$(S_1 \cup S_2 ) \sm \{ x_2 \}$, then $u$ has at least three neighbors in $P$
and hence $p_1Pp_iup_jPp_k$ is a short connection of $(H,x)$ that
contradicts our choice of $P$. So $u$ has a neighbor in 
$(S_1 \cup S_2) \sm \{ x_2 \}$. W.l.o.g. 
$N(u) \cap (S_1 \sm \{ x_2 \}) \neq \emptyset$.
Since $(H,x)$ is proper, $N(u) \cap (S_2 \sm \{ x_2\} ) = \emptyset$.
If $u$ has a unique neighbor in $S_1$, then $j>2$ 
(since $G$ is triangle-free and $u$ has at least two neighbors in $P$)
and hence $up_jPp_k$
is a short connection of $(H,x)$ that contradicts our choice of $P$.
So $u$ is of type 2 w.r.t. $(H,x)$.
If $j=1$ then $|N(u) \cap H'| = 3$ and hence $H' \cup \{ u \}$ induces an ISK4. So $j >1$.
If $ux_2$ is an edge then 
$j>2$ (since $G$ is triangle-free and $u$ has at least three neighbors in
$H'$) and hence
$up_jPp_k$ is a short connection of $(H,x)$
that contradicts our choice of $P$. So $ux_2$ is not an edge.
If $x_2$ has no neighbor in $p_jPp_k$ and $u_2 =x_3$, then let 
$Q=p_jPp_kx_3$. Otherwise let $Q$ be the chordless path from
$p_j$ to $x_2$ in $(S_2 \sm S'_2) \cup \{ p_j, \ldots ,p_k,u_2 \}$.
Then $S_1 \cup Q \cup \{ x,u \}$ induces an ISK4.

Therefore, for some $l \in \{ 3, \ldots ,n \}$, $u$ has a neighbor in
$S_l \sm \{ x_1,x_3 \}$. Since $(H,x)$ is proper, $N(u) \cap
H\subseteq S_l$. If $j=1$ let $Q=up_1$, if $i=k$ let $Q=up_k$, and
otherwise let $Q$ be a chordless path in $(P \sm \{ p_1,p_k \}) \cup
\{ u \}$ from $u$ to a vertex of $P \sm \{ p_1,p_k \}$ that is
adjacent to $x_2$ (note that such a vertex exists by (\ref{w3-3})).
By Lemma \ref{ca2} $Q$ is an appendix of $H$. In particular, $u$ is of
type 1 w.r.t. $(H,x)$. Let $u'$ be the neighbor of $u$ in $H$.  Since
by (\ref{w3-2}) $Q$ cannot be an appendix of $(H,x)$, w.l.o.g.  $j=1$
and $l=n$. Note that $u' \neq x_1$ and $u_1\neq x_2$. Let $p_t$ be the
vertex of $P$ with lowest index adjacent to $x_2$ (such a vertex
exists by (\ref{w3-2})). If $u'=x_n$ then $S_1\cup \{
x,u,u',p_1,\ldots ,p_t\}$ induces an ISK4.  So $u' \neq x_n$. If If
$u_1\neq x_1$ then $S_1\cup \{ x,u,p_1,\ldots ,p_t\}$ together with
the $x_1u'$-subpath of $S_1$ induces an ISK4. So $u_1 =x_1$.  But then
$S_n \cup \{ x,u,p_1, \ldots ,p_t, x_2\}$ induces an ISK4.
  \end{proofclaim}

By (\ref{w3-4}) $(H',x)$ is a proper wheel that has fewer sectors than $(H,x)$,
a contradiction.
\end{proof}

\begin{lemma}\label{w4}
Let $G$ be a \{triangle, ISK4, $K_{3,3}$\}-free graph.
Let $(H,x)$ be a proper wheel of $G$ with fewest number of spokes.
 Then for every $i \in \{ 1, \ldots ,n \}$, $(N[x] \sm H) \cup 
\{ x_i,x_{i+1}\}$ is a star cutset separating $S_i$ from $H \setminus S_i$. 
\end{lemma}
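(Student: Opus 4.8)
The plan is to take $C=(N[x]\setminus H)\cup\{x_i,x_{i+1}\}$ and first observe that it is a star cutset with center $x$: every vertex of $N(x)\setminus H$ is a neighbour of $x$ by definition, and so are $x_i$ and $x_{i+1}$ (they are neighbours of $x$ on $H$), hence $x$ is adjacent to every other vertex of $C$. It therefore suffices to show that $C$ is a cutset separating $S_i$ from $H\setminus S_i$, i.e.\ that in $G\setminus C$ no vertex of the interior $S_i^{\circ}:=S_i\setminus\{x_i,x_{i+1}\}$ lies in the same component as a vertex of $H\setminus S_i$. Both of these sets are non-empty: since $G$ is triangle-free and $x$ sees both $x_i$ and $x_{i+1}$, the edge $x_ix_{i+1}$ is absent, so $S_i$ has length at least $2$ and $S_i^{\circ}\neq\emptyset$; and $n\geq 4$, because $n=3$ would make $H\cup\{x\}$ an ISK4, so $H\setminus S_i\neq\emptyset$. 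Proving this separation also establishes that $C$ is a genuine cutset.

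Next I would set up the reduction. Suppose for contradiction that some path of $G\setminus C$ joins $S_i^{\circ}$ to $H\setminus S_i$, and let $R=r_0\ldots r_m$ be a shortest such path, with $r_0\in S_i^{\circ}$ and $r_m\in H\setminus S_i$. Since $N[x]\subseteq H\cup C$, every internal vertex of $R$ lies in $G\setminus(H\cup N[x])$ and so is of type $0$, $1$ or $2$ with respect to $(H,x)$. Minimality of $R$ forces each internal vertex to have no neighbour in $S_i^{\circ}\cup(H\setminus S_i)$, so the only vertices of $H$ that an internal vertex of $R$ can see are $x_i$ and $x_{i+1}$. Moreover $r_0r_m$ is not an edge ($H$ is chordless and $r_0,r_m$ lie in different sectors), so $m\geq 2$ and $Q:=r_1\ldots r_{m-1}$ is a non-trivial external chordless path whose end-vertices $r_1,r_{m-1}$ are of type $1$ or $2$, being adjacent to $r_0$ and to $r_m$ respectively.

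The heart of the argument is then to turn $R$ into a forbidden configuration. The plan is to reroute $H$ through (a suitable chordless piece of) $R$: replacing one of the two arcs of $H$ between $r_0$ and $r_m$ by $R$ yields a cycle $H'$ on which $x$ has no neighbour in $R$, so its spokes on $H'$ are exactly those on the arc we keep. Both arcs carry at least one spoke and together carry all $n$ of them, so for $n\geq 5$ we may keep the arc carrying at least $\lceil n/2\rceil\geq 3$ spokes; then $(H',x)$ is a wheel with strictly fewer spokes than $(H,x)$. One checks, by an analysis mirroring the properness verification in the proof of Lemma~\ref{w3}, that $(H',x)$ is again a \emph{proper} wheel, contradicting the choice of $(H,x)$ as having the fewest spokes. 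When the configuration does not reroute cleanly — precisely when internal vertices of $Q$ attach to the gate-spokes $x_i$ or $x_{i+1}$, or when $r_m$ sits in a sector adjacent to $S_i$ — I would instead extract from $R$ a short connection between two consecutive sectors (forbidden by Lemma~\ref{w3}) or an appendix of $(H,x)$ (which by Lemma~\ref{w2} would force $n=4$), using Lemma~\ref{ca2} to recognise the relevant subpath as an appendix of $H$ and Lemma~\ref{wa1} to pin down its sectors; in the leftover configurations one exhibits an induced subdivision of $K_4$ directly inside $H\cup\{x\}\cup R$.

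The main obstacle is exactly this last step: controlling how the internal vertices of $R$ attach to the two spokes $x_i$ and $x_{i+1}$ bounding $S_i$, since such chords are what prevent the rerouted cycle $H'$ from being an induced hole and what block a direct reduction to a short connection. Here triangle-freeness, $K_{3,3}$-freeness, and the properness and spoke-minimality of $(H,x)$ all have to be combined at once. The $4$-wheel case $n=4$, where keeping the larger arc may leave only two spokes so that rerouting fails to produce a wheel, must be treated separately, directly producing an ISK4 from the three sectors of $H\setminus S_i$ together with $R$ and $x$.
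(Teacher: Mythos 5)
Your setup (the star centered at $x$, the shortest connection $R$ whose internal vertices are of type 0, 1 or 2 and can only attach to $x_i,x_{i+1}$ on $H$) matches the paper's, and your ``fallback'' toolkit --- extracting a short connection (Lemma~\ref{w3}) or an appendix of $(H,x)$ (Lemma~\ref{w2} combined with Lemma~\ref{wa1}, recognized via Lemma~\ref{ca2}) --- is in fact exactly how the paper disposes of \emph{every} case. But what you call the heart of your argument has a genuine gap: rerouting $H$ through $R$ to get a wheel $(H',x)$ with fewer spokes contradicts the choice of $(H,x)$ only if $(H',x)$ is \emph{proper}, and you give no proof of properness, only the assertion that it ``mirrors'' the verification in Lemma~\ref{w3}. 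That verification is not routine: in the paper, the analogous claims (claim~(1) in Lemma~\ref{w2}, claim~(4) in Lemma~\ref{w3}) are the longest parts of those proofs, and they lean on minimality hypotheses engineered for that purpose --- in Lemma~\ref{w3}, the pair $((H,x),P)$ is chosen to minimize $|V(P)|$ over \emph{all} proper wheels with fewest spokes and all their short connections, and the absence of appendices (claim~(2) there) is available because $n\geq 5$. Your $R$ is only a shortest path in $G\setminus C$ for the fixed wheel; when a vertex $u\notin H'\cup N[x]$ attaches badly to the $R$-part of $H'$, the shortest-path property does not by itself produce the contradictions that the stronger minimality yields in Lemma~\ref{w3}. (A small symptom of the same looseness: minimality does not force \emph{each} internal vertex of $R$ to have no neighbour in $S_i^{\circ}\cup(H\setminus S_i)$; the vertices $r_1$ and $r_{m-1}$ can have such neighbours, and properness, not minimality, is what limits them.)

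The second gap is that the cases you set aside as ``not rerouting cleanly'' --- internal vertices of $R$ adjacent to $x_i$ or $x_{i+1}$, or $r_m$ in a sector adjacent to $S_i$ --- are not leftovers; they are the bulk of the paper's proof. There the paper defines $p_j$, the last vertex of $P\setminus p_1$ adjacent to $x_1$ or $x_2$, proves that if $p_1$ has two neighbours in $S_1\setminus x_2$ then $x_1$ must attach to $P\setminus p_1$, and runs separate analyses for $i=2$, $i=n$ and $i\in\{3,\ldots,n-1\}$, each ending either in a reduction to Lemma~\ref{w2} or~\ref{w3} or in a specific induced subdivision of $K_4$ (e.g.\ $S_2\cup\{x,p_j,\ldots,p_k\}$, or $S_n\cup\{x,x_2,p_j,\ldots,p_k\}$). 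You name the right lemmas but carry out none of this analysis. A final structural remark: the paper's proof shows that the appendix/short-connection extraction handles \emph{all} configurations, clean or not, so the rerouting you propose is not only unproven but unnecessary --- the paper deliberately confines wheel-reconstruction arguments to Lemmas~\ref{w2} and~\ref{w3}, where the minimality needed to verify properness is in place, and Lemma~\ref{w4} is proved purely by reduction to them.
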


\begin{proof}
Assume not. Then w.l.o.g. there is a direct connection $P=p_1 \ldots p_k$
from $S_1 \sm \{ x_1,x_2 \}$ to $H \sm S_1$ in 
$G \sm ((N[x] \sm H) \cup \{ x_1,x_2 \})$. Note that the only vertices
of $H$ that may have a neighbor in the interior of $P$ are $x_1$ and $x_2$.
Since $(H,x)$ is proper, 
$k>1$ and
$p_1$ and $p_k$ are of type 1 or 2 w.r.t. $(H,x)$.
Let $i \in \{ 2, \ldots ,n \}$ be such that $N(p_k) \cap H \subseteq S_i$.

First suppose that no vertex of $\{ x_1,x_2 \}$ has a neighbor in
$P \sm \{ p_1,p_k \}$. By Lemma \ref{ca2}, $P$ is an appendix of $H$.
In particular, $p_1$ and $p_k$ are both of type 1 w.r.t. $(H,x)$.
If $i \not \in \{ 2,n \}$ then $P$ is an appendix of $(H,x)$.
It follows from Lemma \ref{w2} that $n=4$ and $i=3$. But then,
since $p_1$ has a neighbor in $S_1 \sm \{ x_1,x_2 \}$, $(H,x)$ and $P$
contradict Lemma \ref{wa1}.
So $i \in \{ 2,n \}$, and hence $P$ is a short connection, contradicting
Lemma \ref{w3}. Therefore, a vertex of $\{ x_1,x_2 \}$ has a neighbor
in $P \sm \{ p_1,p_k \}$.

Let $p_j$ be the vertex of $P \sm p_1$ with highest index adjacent to
a vertex of $\{ x_1,x_2 \}$. W.l.o.g. $p_jx_2$ is an edge. 
We now show that if $p_1$ has two neighbors in $S_1 \sm x_2$, then
$x_1$ has a neighbor in $P \sm p_1$. Assume not. Then $p_1$ has two
neighbors in $S_1 \sm x_2$ and $x_1$ does not have a neighbor in
$P \sm p_1$. Since $p_1$ is of type 2 w.r.t. $(H,x)$, $p_1x_2$
is not an edge. But then $H$ and $p_1 \ldots p_{j'}$ (where
$p_{j'}$ is the vertex of $P$ with lowest index adjacent to $x_2$)
contradict Lemma \ref{ca2}.

Suppose $i=2$. If $p_k$ has two neighbors in $S_2 \sm x_2$, then $S_2
\cup \{x, p_j, \dots, p_k\}$ induces an ISK4. So, $p_k$ has a unique
neighbor in $S_2 \sm x_2$.  If $x_1$ does not have a neighbor in $P \sm p_1$,
then $p_1$ has a unique neighbor in $S_1 \sm x_2$ and hence 
$P$ is a short connection of $(H,x)$, contradicting Lemma \ref{w3}.
So $x_1$ has a neighbor in $P \sm p_1$. Let $p_t$ be such a neighbor 
with highest index. Then $p_tPp_k$ is a short connection of $(H,x)$,
contradicting Lemma \ref{w3}. So $i \neq 2$. 
If $i=n$ then either $p_jPp_k$ is a short connection of $(H,x)$
contradicting Lemma \ref{w3}, or $S_n \cup \{x, x_2, p_j, \dots,
p_k\}$ contains an ISK4.
 
Therefore, $i \in \{ 3,\ldots ,n-1 \}$ and $p_k$ has a neighbor in
$H \sm (S_1 \cup S_2 \cup S_n)$. By Lemma \ref{ca2} applied to $H$ and 
$p_jPp_k$, vertex $p_k$ is of type 1 w.r.t. $(H,x)$ and $x_1p_j$ is not an
edge. But then $p_jPp_k$ is an appendix of $(H,x)$. By Lemma \ref{w2}
it follows that $n=4$ and $i=3$. But then, since $p_k$ has a neighbor in
$S_3 \sm \{ x_3,x_4 \}$, $(H,x)$ and $p_jPp_k$ contradict Lemma \ref{wa1}.
\end{proof}

We say that a graph $G$ has a {\em wheel decomposition} if for
some wheel $(H,x)$, for every $i  \in \{ 1, \ldots ,n \}$,
$(N[x] \sm H) \cup \{ x_i,x_{i+1} \}$ is a cutset separating
$S_i$ from $H \sm S_i$. We say that such a wheel decomposition is
{\em w.r.t. wheel $(H,x)$}.  Note that if a graph has a wheel
decomposition, then it has a star cutset.

\begin{theorem}\label{maindecomp}
If $G$ is a \{triangle, ISK4\}-free graph, then either $G$ is 
a series-parallel graph or a complete bipartite graph, or $G$ has a
clique cutset of size at most two, or $G$ has a wheel decomposition.
\end{theorem}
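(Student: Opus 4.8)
The plan is to assemble the theorem directly from the structural lemmas already established, since the substantive combinatorial work lies in Lemmas~\ref{w1}--\ref{w4}. I would argue by a clean case split driven by Lemma~\ref{l:beginT}. First, if $G$ is series-parallel we are in the first outcome and there is nothing to prove, so I assume $G$ is not series-parallel. Lemma~\ref{l:beginT} then tells me that $G$ either contains a wheel or contains $K_{3,3}$, and these two possibilities dictate the two main branches of the argument.

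In the branch where $G$ contains $K_{3,3}$, the result is immediate: $G$ is \{ISK4, triangle\}-free and contains $K_{3,3}$, so Lemma~\ref{l:decK33T} yields that $G$ is a complete bipartite graph or has a clique-cutset of size at most~2, both of which are among the allowed outcomes. No extra work is needed here.

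The interesting branch is when $G$ contains no $K_{3,3}$. Then $G$ is \{triangle, ISK4, $K_{3,3}$\}-free, which is exactly the hypothesis under which Lemmas~\ref{w1} and~\ref{w4} are stated, and, since $G$ is not series-parallel, Lemma~\ref{l:beginT} guarantees that $G$ contains a wheel. The plan is to first produce a proper wheel: by Lemma~\ref{w1} a wheel of $G$ with fewest vertices is proper, so proper wheels exist. I would then choose, among all proper wheels of $G$, one $(H,x)$ with fewest number of spokes---this is precisely the minimality hypothesis of Lemma~\ref{w4}. Applying Lemma~\ref{w4} gives that for every $i$ the set $(N[x]\sm H)\cup\{x_i,x_{i+1}\}$ is a star cutset separating $S_i$ from $H\sm S_i$, which is exactly the definition of a wheel decomposition w.r.t. $(H,x)$. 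Hence $G$ has a wheel decomposition, the final allowed outcome.

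I do not expect a genuine obstacle at the level of the theorem itself; the difficulty has been front-loaded into the wheel lemmas. The one point requiring care is hypothesis bookkeeping: one must enter the wheel branch only after ruling out $K_{3,3}$, so that the triangle/ISK4/$K_{3,3}$-free hypotheses of Lemmas~\ref{w1} and~\ref{w4} are legitimately available, and one must invoke the two minimality conditions in the right order---first fewest vertices (to obtain properness via Lemma~\ref{w1}), then fewest spokes among proper wheels (to apply Lemma~\ref{w4}). Ensuring that these two minimizations coexist rather than conflict is the only real subtlety, and it is resolved simply by minimizing the number of spokes over the already-nonempty family of proper wheels supplied by Lemma~\ref{w1}.
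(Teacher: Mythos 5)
Your proof is correct and follows essentially the same route as the paper: rule out the series-parallel case, apply Lemma~\ref{l:beginT}, dispatch the $K_{3,3}$ branch via Lemma~\ref{l:decK33T}, and in the remaining branch obtain a proper wheel from Lemma~\ref{w1} and a wheel decomposition from Lemma~\ref{w4}. Your version is in fact slightly more explicit than the paper's about the two minimality conditions (fewest vertices for properness, then fewest spokes among proper wheels), which is a fair point of care; the only tiny inaccuracy is that Lemma~\ref{w1} is stated for \{triangle, ISK4\}-free graphs and does not need the $K_{3,3}$-free hypothesis, though invoking it under the stronger hypothesis is of course harmless.
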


\begin{proof}
Assume $G$ is not series-parallel nor a complete bipartite graph.
By Lemma \ref{l:beginT} $G$ contains a wheel or $K_{3,3}$.
By Lemma \ref{l:decK33T} if $G$ contains a $K_{3,3}$ then it has
a clique cutset of size at most two. So we may assume that $G$
does not contain a $K_{3,3}$. So $G$ contains a wheel. By Lemma~\ref{w1}
$G$ contains a proper wheel, and hence by Lemma~\ref{w4}
$G$ has a wheel decomposition.
\end{proof}

\begin{theorem}\label{maindecomp2}
  If $G$ is a \{triangle, ISK4, $K_{3,3}$\}-free graph, then either
  $G$ is series-parallel or $G$ has a wheel decomposition.
\end{theorem}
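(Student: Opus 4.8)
The plan is to run the same argument as in the proof of Theorem~\ref{maindecomp}, but with the $K_{3,3}$ and complete-bipartite branches deleted, since both are excluded by the hypothesis that $G$ is $K_{3,3}$-free. Concretely, I would assume that $G$ is not series-parallel and deduce that it must then have a wheel decomposition.

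First I would apply Lemma~\ref{l:beginT} to $G$. Since $G$ is \{triangle, ISK4\}-free and not series-parallel, it either contains a wheel or contains $K_{3,3}$. The second alternative is forbidden by hypothesis, so $G$ contains a wheel.

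Next I would pass to a carefully chosen wheel. By Lemma~\ref{w1}, a wheel of $G$ with the fewest vertices is proper, so $G$ contains a proper wheel. Among all proper wheels of $G$, I would then select one $(H,x)$ minimizing the number of spokes and invoke Lemma~\ref{w4}. Its conclusion is that for every $i \in \{1, \dots, n\}$ the set $(N[x] \sm H) \cup \{x_i, x_{i+1}\}$ is a star cutset separating $S_i$ from $H \sm S_i$, which is precisely the defining condition for a wheel decomposition w.r.t.\ $(H,x)$. Hence $G$ has a wheel decomposition, completing the proof.

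I do not expect any real obstacle in this final assembly: all of the combinatorial difficulty has already been absorbed into the supporting lemmas, and in particular into Lemma~\ref{w4}, whose proof rests in turn on Lemmas~\ref{w2} and~\ref{w3} (and ultimately on the crossing-appendix analysis of Lemmas~\ref{ca} and~\ref{ca2}). The only point needing a moment's care is the order of the two minimality choices: first minimize vertices to guarantee properness via Lemma~\ref{w1}, and then, among the proper wheels so obtained, minimize spokes in order to apply Lemma~\ref{w4}. The wheel extracted in the first step need not itself have fewest spokes, so re-selecting a proper wheel with the fewest spokes is exactly what licenses the appeal to Lemma~\ref{w4}.
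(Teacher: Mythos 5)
Your proof is correct and follows essentially the same route as the paper: apply Lemma~\ref{l:beginT} (discarding the $K_{3,3}$ outcome by hypothesis), obtain a proper wheel via Lemma~\ref{w1}, and then apply Lemma~\ref{w4} to a proper wheel with fewest spokes to get the wheel decomposition. Your remark about the order of the two minimality choices (vertices first for properness, then spokes among proper wheels) is exactly the right reading of how Lemmas~\ref{w1} and~\ref{w4} fit together.
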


\begin{proof}
Assume $G$ is not series-parallel.
By Lemma \ref{l:beginT} $G$ contains a wheel.
By Lemma \ref{w1}
$G$ contains a proper wheel, and hence by Lemma \ref{w4}
$G$ has a wheel decomposition.
\end{proof}

The following corollary is needed in the next section. 

\begin{corollary}\label{cmain}
If $G$ is an ISK4-free graph of girth at least 5, then either $G$ is series-parallel
or $G$ has a star cutset.
\end{corollary}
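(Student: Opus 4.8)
The plan is to reduce the statement to Theorem~\ref{maindecomp2} by observing that the girth hypothesis is strong enough to supply the two extra conditions (triangle-freeness and $K_{3,3}$-freeness) that this theorem requires, on top of the ISK4-freeness already assumed.

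First I would record the elementary consequences of the girth condition. A graph of girth at least~5 contains no cycle of length~3 or~4; in particular it is triangle-free. Moreover $K_{3,3}$ itself has girth~4, so any graph containing $K_{3,3}$ as a subgraph (and in particular as an induced subgraph) contains a cycle of length~4, and therefore has girth at most~4. Hence $G$, having girth at least~5, is $K_{3,3}$-free. Together with the hypothesis that $G$ is ISK4-free, this places $G$ precisely in the class of \{triangle, ISK4, $K_{3,3}$\}-free graphs to which Theorem~\ref{maindecomp2} applies.

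I would then apply Theorem~\ref{maindecomp2} directly: either $G$ is series-parallel, which is one of the two desired outcomes, or $G$ has a wheel decomposition. In the latter case I would invoke the remark following the definition of a wheel decomposition, namely that a graph with a wheel decomposition has a star cutset; this yields the second desired outcome and completes the argument.

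I expect no genuine obstacle here, since the corollary is essentially a specialization of Theorem~\ref{maindecomp2}. The only point that deserves to be stated carefully is why $K_{3,3}$-freeness follows: Theorem~\ref{maindecomp2} requires only that $K_{3,3}$ be excluded as an \emph{induced} subgraph, whereas the girth condition excludes it as a subgraph altogether, which is the stronger property and is exactly what makes the reduction immediate.
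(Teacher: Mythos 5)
Your proposal is correct and follows exactly the paper's own route: the paper likewise derives the corollary directly from Theorem~\ref{maindecomp2}, noting that girth at least~5 rules out triangles and $K_{3,3}$ (which contains a 4-cycle), and relies on the remark that a wheel decomposition yields a star cutset. Your write-up simply spells out these steps in more detail than the paper's one-line proof.
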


\begin{proof}
Follows directly from Theorem \ref{maindecomp2} because $K_{3, 3}$
contains a cycle of length~4.
\end{proof}

\section{Chordless graphs}
\label{sec:chordless}

A graph $G$ is \emph{chordless} if no cycle in $G$ has a chord.
Chordless graphs were introduced in~\cite{nicolas:isk4} as roots of
wheel-free line graphs, and it is a surprise to us that we need them
here for a completely different reason in a very similar class.  A
graph is \emph{sparse} if every edge is incident to at least one
vertex of degree at most~2.  A sparse graph is chordless because any
chord of a cycle is an edge between two vertices of degree at least
three.  Recall that proper 2-cutsets are defined in
Section~\ref{sec:dec}.

\begin{theorem}[see~\cite{aboulkerRTV:propeller}]
  \label{t:chordless}
  If $G$ is a 2-connected chordless graph, then either $G$ is sparse
  or $G$ admits a proper 2-cutset.
\end{theorem}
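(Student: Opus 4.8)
The plan is to prove the contrapositive in constructive form: assume $G$ is $2$-connected, chordless and \emph{not} sparse, and produce a proper $2$-cutset. Non-sparseness hands us an edge $xy$ with $\deg(x)\ge 3$ and $\deg(y)\ge 3$, which I keep as a fixed witness throughout. The whole argument rests on one rerouting principle for chordless graphs: if $C$ is a cycle of $G$ and $w\notin V(C)$, then $w$ has at most two neighbours on $C$, and if it has exactly two they must be consecutive on $C$ --- otherwise rerouting $C$ through $w$ along one of the two arcs between its neighbours turns a former edge of $C$ into a chord of the new cycle. I would first extract the two consequences I need. The first is local: two adjacent vertices have at most one common neighbour (two common neighbours of $x$ and $y$ would form a $4$-cycle with the chord $xy$). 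The second concerns cutsets: if $\{a,b\}$ is any $2$-cutset of $G$, then $2$-connectivity forces each of $a,b$ to have a neighbour on each side, so each side carries an $ab$-path of length at least~$2$; were $a$ adjacent to $b$, the union of these two internally disjoint $ab$-paths would be a cycle having $ab$ as a chord. Hence \emph{every} $2$-cutset of $G$ is automatically a pair of non-adjacent vertices whose two sides each contain an $ab$-path, so the only condition separating a generic $2$-cutset from a \emph{proper} one is that neither side be a chordless path.

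The next step is to guarantee that a $2$-cutset exists at all, i.e.\ that $G$ is not $3$-connected. Suppose it were; then $\delta(G)\ge 3$, so fix a vertex $v$ with three distinct neighbours $p,q,r$. By the classical theorem of Dirac that any three vertices of a $3$-connected graph lie on a common cycle, choose a cycle $C$ through $p,q,r$. If $v\in V(C)$, then $v$ has the three neighbours $p,q,r$ on $C$ while only two of them can be its cycle-neighbours, so at least one of $vp,vq,vr$ is a chord of $C$. If $v\notin V(C)$, take the arc of $C$ between two of these neighbours that contains the third, and close it up through $v$: the resulting cycle has the third neighbour in its interior, and the edge joining $v$ to it becomes a chord. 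Either way $G$ has a chorded cycle, contradicting chordlessness. Therefore $G$ has a $2$-cutset $\{a,b\}$, and by the previous paragraph it is non-adjacent with $ab$-paths on both sides.

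It remains to turn this into a proper $2$-cutset, and here I would argue by induction on $|V(G)|$. If the cutset $\{a,b\}$ from the previous step has neither side a chordless path, it is already proper and we are done. Otherwise one side $X$ is a chordless $ab$-path, i.e.\ a \emph{thread} whose internal vertices have degree~$2$; since $\deg(x),\deg(y)\ge 3$, this thread is disjoint from $\{x,y\}$, so $x,y$ lie on the opposite side $Y$. I would then shorten this thread (replacing the induced path $X$ by a single interior vertex, or, when $X$ is already a single vertex, deleting it) to obtain a strictly smaller graph $G'$ that is again $2$-connected and chordless, and still not sparse because the untouched edge $xy$ survives with both endpoints of degree $\ge 3$. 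By the induction hypothesis $G'$ has a proper $2$-cutset, which I would then lift back across the re-expanded thread to a proper $2$-cutset of $G$. The step I expect to be the main obstacle is exactly this reduction-and-lifting: ensuring that shortening or deleting a thread neither destroys $2$-connectivity nor creates a chord (one cannot simply contract the thread to the edge $ab$, since a second $ab$-path in $Y$ would make $ab$ a chord), handling the degenerate minimal threads consisting of a single degree-$2$ vertex, and checking that a proper $2$-cutset of $G'$ does not rely essentially on the suppressed vertex when pulled back to $G$. The small common-neighbour (triangle) configuration permitted by the first consequence above will also have to be dispatched separately in these reductions.
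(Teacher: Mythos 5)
You should note at the outset that the paper does not prove Theorem~\ref{t:chordless} at all: it is quoted from~\cite{aboulkerRTV:propeller}, so your attempt can only be judged on its own merits, not against an in-paper argument. Your preliminary steps are sound: in a $2$-connected chordless graph every $2$-cutset consists of non-adjacent vertices with an $ab$-path on each side, and such a graph with two adjacent vertices of degree at least $3$ cannot be $3$-connected (the Dirac-cycle argument is fine). However, your ``rerouting principle'' is stated backwards: in a chordless graph, a vertex $w\notin V(C)$ with exactly two neighbours on a cycle $C$ must have them \emph{non}-consecutive, not consecutive. ($K_{2,3}$ is $2$-connected and chordless, and the vertex off a $4$-cycle has two opposite neighbours on it; it is two \emph{consecutive} neighbours $u,v$ that produce a chord, namely the edge $uv$, after rerouting $C$ through $w$.) Your concrete uses of the principle happen to be argued directly and correctly, so this slip is not fatal by itself, but the principle as stated cannot be leaned on for the part of the proof you left open.

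The genuine gap is the inductive step, which you defer as an acknowledged ``obstacle'' --- and that step, as described, is false in the deletion case. Take $G$ with vertices $x,y,c_1,c_2,d_1,d_2,z,q_1,q_2,b,u$ and edges $xy$, $xc_1$, $xc_2$, $yd_1$, $yd_2$, $c_1z$, $c_2z$, $d_1z$, $d_2z$, $zq_1$, $zq_2$, $q_1b$, $q_2b$, $ux$, $ub$. This graph is $2$-connected; it is chordless, because a chord must join two vertices of degree at least $3$, the only such edge is $xy$, and every cycle through both $x$ and $y$ uses the edge $xy$ (the other two edges at $y$ both lead to $z$ through $d_1,d_2$); and it is not sparse, since $x$ and $y$ have degrees $4$ and $3$. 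Now $\{x,b\}$ is a $2$-cutset leaving exactly two components: the single-vertex thread $\{u\}$ and a non-path side. Your recipe deletes $u$, but $G-u$ is \emph{not} $2$-connected ($z$ becomes a cutvertex separating $\{q_1,q_2,b\}$ from the rest), so the induction hypothesis cannot be invoked; non-sparseness can fail similarly when $x$ or $y$ coincides with a cutset vertex of degree exactly $3$ (which also shows your claim that ``$x,y$ lie on the opposite side $Y$'' is inaccurate --- they may be the cutset vertices themselves). Moreover, one can check that this is the \emph{only} configuration your splitting observations do not already dispose of: if all components of $G\sm\{a,b\}$ are threads then $G$ is sparse, and if there are two non-thread components, or at least three components one of which is a non-thread, a proper split exists. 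So the entire content of the theorem is concentrated precisely in the case your proof does not settle; a correct argument needs either a smarter (e.g.\ extremal) choice of cutset or a different reduction, not the thread suppression you propose.
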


The following theorem is mentioned in \cite{aboulkerRTV:propeller}
without a proof, and we need it in the next section.  So, we prove it
for the sake of completeness.

\begin{theorem}
  \label{th:Cchordless}
  In every cycle of a 2-connected chordless graph that is not a cycle,
  there exist four vertices $a,b,c,d$ that appear in this order and
  such that $a, c$ have degree~2 and b, d have degree at least~3.
\end{theorem}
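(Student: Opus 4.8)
The plan is to work directly from 2-connectivity and chordlessness, analysing the \emph{bridges} of the given cycle $C$ (the connected components of $G\sm V(C)$ together with their attachments to $C$); this route does not need Theorem~\ref{t:chordless}.

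First I reduce the statement to a combinatorial condition on $C$. Label each vertex of $C$ by its degree in $G$, writing $H$ for degree at least~$3$ and $L$ for degree~$2$, and read these labels cyclically around $C$. A quadruple $a,b,c,d$ as required is exactly a cyclic subsequence of pattern $L,H,L,H$, and such a subsequence exists if and only if the labels form at least two maximal blocks of $H$ and at least two maximal blocks of $L$. So it suffices to rule out two ``bad'' configurations: (A) every vertex of $C$ has degree at least~$3$; and (B) the degree-$\ge 3$ vertices of $C$ form a single arc $v_1\dots v_p$ while the degree-$2$ vertices form the complementary arc. A short argument shows $C$ has at least two vertices of degree at least~$3$ (if none then $G=C$; if exactly one, it is a cutvertex, contradicting $2$-connectivity), so in both cases there is an edge of $C$ joining two degree-$\ge 3$ vertices and all degree-$\ge 3$ vertices lie on one arc.

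Next I set up the bridge analysis. A degree-$2$ vertex of $C$ has both neighbours on $C$, so every bridge attaches only to degree-$\ge 3$ vertices; and since $G$ is $2$-connected, every bridge attaches to at least two of them. The key use of chordlessness is the claim that \emph{no bridge has two $C$-consecutive attachments}: if a bridge attached to $C$-adjacent vertices $x,y$, then a path through the bridge from $x$ to $y$ with interior off $C$, closed along $C\sm xy$, is a cycle on which $x$ and $y$ are non-adjacent, so the edge $xy$ would be a chord. Hence every bridge links two non-consecutive degree-$\ge 3$ vertices (this is in the spirit of Lemmas~\ref{ca} and~\ref{ca2}).

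The heart of the proof is to exhibit a chord in each bad configuration by finding two \emph{crossing} bridges, i.e.\ bridges with attachments $\{A_1,A_3\}$ and $\{A_2,A_4\}$ interleaving as $A_1,A_2,A_3,A_4$ around $C$, with the crossing pair $A_1,A_2$ an edge of $C$. Given such a crossing one routes a single cycle $Z$ through the two bridges and the two arcs $[A_2,A_3]$ and $[A_4,A_1]$; then $A_1$ and $A_2$ both lie on $Z$ but are non-adjacent there, so the edge $A_1A_2$ is a chord. To locate the crossing I use an extremal/parity argument: in configuration~(A) I take a bridge subtending a shortest arc of $C$ and show that a bridge at an interior vertex of that arc must reach outside it, forcing a crossing; in configuration~(B), orienting the high arc $v_1\dots v_p$, each $v_i$ reaches strictly left or strictly right to a non-consecutive partner, and since $v_1$ can only reach right while $v_p$ can only reach left, some consecutive pair $v_i,v_{i+1}$ has $v_i$ reaching right and $v_{i+1}$ reaching left, which is precisely a crossing over the edge $v_iv_{i+1}$. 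I expect this last step to be the main obstacle: making the routing of $Z$ and the choice of crossing bridges fully rigorous — ensuring that the arcs and bridge interiors are genuinely vertex-disjoint and that the extremal choices force a partner on the required side — requires the careful minimal-object bookkeeping that forms the technical core of the argument.
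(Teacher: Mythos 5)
Your strategy is sound, and it is a genuinely different route from the paper's. The paper proves Theorem~\ref{th:Cchordless} by induction on $|V(G)|$, leaning on Theorem~\ref{t:chordless}: in the base case $G$ is sparse, where the result follows because a cycle of a 2-connected non-cycle graph carries at least two vertices of degree at least~3 and, in a sparse graph, no two such vertices are adjacent; in the inductive step $G$ has a proper 2-cutset, and the four vertices are recovered from the blocks of decomposition (with marker vertices) by checking that degrees are preserved. Your proof instead analyses the bridges of the given cycle $C$ directly: chordlessness forbids a bridge from having two $C$-consecutive attachments, 2-connectivity forces every bridge to attach to at least two (degree-$\geq 3$) vertices, and each bad labelling of $C$ is killed by producing two bridges crossing over an \emph{edge} $A_1A_2$ of $C$, so that the cycle routed through the two bridge paths and the arcs $[A_2,A_3]$ and $[A_4,A_1]$ has $A_1A_2$ as a chord. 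Your argument is self-contained (it does not import the decomposition theorem of~\cite{aboulkerRTV:propeller}), at the cost of a heavier case analysis; the paper's proof is shorter but rests on that external theorem.

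One step must be tightened, and it is the one you flagged. A crossing of bridges is by itself no contradiction: $C$ together with the two bridge paths is only a subdivision of $K_4$ as a \emph{subgraph}, and a sufficiently subdivided $K_4$ is chordless; you genuinely need the crossing pair to be an edge of $C$, as your own chord argument requires. Your configuration~(B) argument delivers exactly this: the largest $i$ with $v_i$ reaching right has $v_{i+1}$ reaching left, the two bridges are distinct (a single bridge cannot attach to both of the $C$-consecutive vertices $v_i,v_{i+1}$), and they cross over the edge $v_iv_{i+1}$. But in configuration~(A), ``a bridge at an interior vertex of the shortest arc must reach outside it'' yields only an unanchored crossing. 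The repair stays inside your framework: let $A=[u,w]$ be a shortest arc subtended by consecutive attachments of a single bridge $B$, and take $z$ to be the interior vertex of $A$ adjacent to $u$ (it exists because $u,w$ are not $C$-consecutive). The bridge $B'$ at $z$ is distinct from $B$, and minimality of $A$ forces every attachment of $B'$ other than $z$ to lie strictly outside $A$: a second attachment of $B'$ inside $A$ (including at $w$) would give a strictly shorter arc between consecutive attachments of $B'$, while $u$ is excluded because $uz$ is an edge. Hence $B$ and $B'$ cross over the edge $uz$, and your chord argument applies. With this bookkeeping your proof is complete.
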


\begin{proof}
  We prove the result by induction on $|V(G)|$.  If $G$ is sparse
  (in particular, if $|V(G)| = 3$), then it is enough to check that
  every cycle of $G$ contains at least two vertices of degree at least~3,
  because these vertices cannot be adjacent in a sparse graph.  But
  this true, because a cycle with all vertices of degree 2 must be the
  whole graph (since $G$ is connected), and a cycle with a unique
  vertex of degree~3 cannot exists in a 2-connected graph (the vertex
  of degree at least~3 would be a cut-vertex). 

  So, by Theorem~\ref{t:chordless} we may assume that $G$ has a proper
  2-cutset with split $(X, Y, u, v)$.  We now build two blocks of
  decompositions of $G$ as follows.  The block $G_X$ is obtained from
  $G[X \cup \{u, v\}]$ by adding a marker vertex $m_Y$ adjacent to $u$
  and $v$, and the block $G_X$ is obtained from $G[Y \cup \{u, v\}]$
  by adding a marker vertex $m_X$ adjacent to $u$ and $v$.  By the
  definition of proper 2-cutsets, $|V(G_X)|, |V(G_Y)| \leq |V(G)|$.
  Also, $G_X$ and $G_Y$ are chordless and 2-connected.  So we may
  apply the induction hypothesis to the blocks of decomposition.

  Let $C$ be a cycle of $G$.  If $V(C) \subseteq X\cup \{u, v\}$, then
  $C$ is a cycle of $G_X$, so by the induction hypothesis we get four
  vertices $a, b, c, d$ in $C$.  We now check that a vertex $w \in V(C)$
  has degree 2 in $G$ if and only if it has degree 2 in $G_X$.  This is
  obvious, except if $w \in \{u, v\}$.  But in this case, because of
  $m_Y$ and because $w$ lies in a cycle of $G_X$ that does not contain
  $m_Y$, $w$ has degree at least~3 in both $G$ and $G_X$.  This proves
  our claim.  It follows that we obtain by the induction hypothesis
  the condition that we need for the degrees of $a$, $b$, $c$ and $d$.
  The proof is similar when $V(C) \subseteq Y\cup \{u, v\}$. 

  We may now assume that $C$ has vertices in $X$ and $Y$.  It follows
  that $C$ edge wise partitions into a path $P = u\dots v$ whose
  interior is in $X$ and a path $Q = u \dots v$ whose interior is in
  $Y$.  We apply the induction hypothesis to $G_X$ and $C_X = u P v
  m_Y u$. So, we get four vertices $a$, $b$, $c$ and $d$ in $C_X$ and
  they have degree 2, $\geq 3$, 2, $\geq 3$ respectively (in $G_X$).
  These four vertices are in $C$ and have the degrees we need (in
  $G$), except possibly when $|\{a, c\} \cap \{u, m_Y, v\}| = 1$.  In
  this case, we may assume w.l.o.g. that $a=u$ or $a=m_Y$, and we find
  in place of $a$ a vertex of degree~2 in $Y \cap V(C_Y)$, where $C_Y
  = u Q v m_X u$.  This vertex exists by the induction hypothesis
  applied to $G_Y$ and $C_Y$.
\end{proof}

\section{Degree 2 vertices}
\label{sec:deg2}

We need the following application of Menger's theorem. 

\begin{lemma}
  \label{l:tree}
  Let $T$ be a tree, and suppose that the vertices of $T$ are labelled
  with labels $x$ and $y$ (each vertex may receive one label, both
  labels, or no label).   One and only one of the following situations
  occures.
  \begin{itemize}
  \item In $T$, there exist two vertex-disjoint paths $P$ and $Q$, and
    each of them is from a vertex with label $x$ to a vertex with
    label $y$ (possibly, $P$ and/or $Q$ have length~0).
  \item There exists $v \in V(T)$ and two subtrees of $T$, $T_x$ and
    $T_y$ such that:
    \begin{enumerate}
    \item $V(T_x) \cup V(T_y) = V(T)$;
    \item $V(T_x) \cap V(T_y) = \{v\}$;
    \item $T_x$ contains all vertices of $T$ with label $x$ and $T_y$
      contains all vertices of $T$ with label $y$.
    \end{enumerate}
  \end{itemize}
\end{lemma}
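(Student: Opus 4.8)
The plan is to reduce the statement to the vertex version of Menger's theorem by attaching two apex vertices to $T$. I form a graph $T'$ from $T$ by adding a new vertex $s$ adjacent to exactly the vertices of $T$ carrying label $x$, and a new vertex $t$ adjacent to exactly the vertices carrying label $y$; note that $s$ and $t$ are non-adjacent. The point of this construction is a clean correspondence: every $s$-$t$ path in $T'$ has the form $s, a_1, \dots, a_m, t$, where $a_1$ is $x$-labelled, $a_m$ is $y$-labelled, and $a_1 \dots a_m$ is a path of $T$ (the case $m=1$ being a single vertex bearing both labels, which is exactly the length-$0$ case allowed in the lemma). Since the internal vertices of such paths are precisely the vertices of the corresponding $T$-paths, two vertex-disjoint paths in $T$, each from an $x$-labelled to a $y$-labelled vertex, exist if and only if $T'$ has two internally vertex-disjoint $s$-$t$ paths. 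First I would write out this translation carefully in both directions.

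Next I would apply Menger's theorem to $T'$: since $s$ and $t$ are non-adjacent, either $T'$ contains two internally vertex-disjoint $s$-$t$ paths, or some set $S \subseteq V(T)$ with $|S| \leq 1$ separates $s$ from $t$. The first outcome is exactly the first alternative of the lemma. In the second, I choose $v \in V(T)$ so that $T' \sm \{v\}$ has $s$ and $t$ in different components (the cut vertex when $|S|=1$, and any vertex when $S = \emptyset$, using that $V(T) \neq \emptyset$). Deleting $v$ from the tree $T$ leaves the branches hanging at $v$; because $v$ separates $s$ from $t$ in $T'$, no branch can contain both an $x$-labelled and a $y$-labelled vertex, for otherwise that branch would provide an $s$-$t$ path avoiding $v$. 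I then let $T_y$ be $v$ together with all branches at $v$ that contain a $y$-labelled vertex, and let $T_x$ be $v$ together with all remaining branches. Each of $T_x,T_y$ is $v$ together with a union of branches at $v$, hence a subtree; their union is $V(T)$ and their intersection is $\{v\}$; and the ``no mixed branch'' property guarantees that $T_x$ contains every $x$-label and $T_y$ every $y$-label. This is precisely the second alternative.

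Finally, exclusivity is essentially automatic: by Menger the maximum number of internally disjoint $s$-$t$ paths equals the minimum size of an $s$-$t$ separator, so the two alternatives correspond to this common value being $\geq 2$ versus $\leq 1$, and these cannot both hold. To keep the lemma self-contained I would also give the direct argument: if the two disjoint paths and the subtrees $T_x,T_y$ coexisted, then since $T_x \cap T_y = \{v\}$ in a tree, every path from $T_x \sm \{v\}$ to $T_y \sm \{v\}$ must run through $v$ (a branch meeting both sides would, being connected and avoiding $v$, close a cycle in $T$), and a both-labelled vertex is forced to be $v$; hence $v$ lies on each of the two paths, so they are not disjoint.

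All of the steps are short, and the genuine mathematical content is carried entirely by Menger's theorem. The part needing the most care is the bookkeeping around the degenerate cases in the correspondence: a vertex carrying both labels (which yields a length-$0$ path and is forced to equal $v$ in the second alternative), and the situation where one label is absent (so $S = \emptyset$ and $v$ is chosen arbitrarily). Everything else is verifying that the apex construction faithfully encodes the two alternatives.
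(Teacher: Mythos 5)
Your proof is correct and takes essentially the same approach as the paper's: both use Menger's theorem to produce either the two disjoint paths or a single vertex $v$ meeting every path from an $x$-labelled vertex to a $y$-labelled vertex, and then assemble $T_x$ and $T_y$ from $v$ together with the components of $T \setminus \{v\}$. The only difference is presentational: you make the reduction to the standard two-vertex form of Menger explicit via the apex vertices $s$ and $t$ (handling the empty-separator case by choosing $v$ arbitrarily), whereas the paper invokes the vertex-set version of Menger directly and disposes of the degenerate case of at most one $x$-labelled vertex separately.
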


\begin{proof}
  It is clear that not both outcomes hold, because if the second
  holds, then $v$ must be on every path from a vertex with label $x$
  to a vertex with label $y$, so no two such paths can be
  vertex-disjoint.  

  If at most one vertex of $T$ has label $x$, then we nominate this
  vertex as $v$ and we set $T_x=T[\{v\}]$, $T_y = T$.  It is easy
  to see that $v,$ $T_x$ and $T_y$ satisfy all the requirements of the
  second outcome.  Hence, we may assume that at least two vertices of
  $T$ have label $x$, and similarly, at least two vertices of $T$ have
  label $y$.  By the classical Menger's theorem, if the first outcome does
  not hold, then there exists a vertex $v$ in $T$ such that every path
  from a vertex with label $x$ to a vertex with label $y$ contains
  $v$.  In particular, every component of $T\sm v$ contains at most
  one label.  So, the second outcome holds: we define $T_x$ as the
  subtree of $T$ formed by $v$ and all components containing vertices
  with label $x$, and we define $T_y$ as the subtree of $T$ formed by
  $v$ and all the other components.
\end{proof}

A graph $G$ together with two of its vertices $x$ and $y$ such that
$xy\in E(G)$ or $x=y$, have the \emph{$(x, y)$-property} if $V(G)\sm
(N[x] \cup N[y])$ contains a vertex of degree 2 in $G$.  Instead of
$(x,x)$-property, we simply write \emph{$x$-property}.  The $(x,
y)$-property is very convenient for us, because it ensures the
existence of vertices of degree~2, and also because it is well
preserved in proofs by induction.  Unfortunately, not all graphs in
our class have the $(x, y)$-property, for intance the graphs
represented in Fig.~\ref{fig} do not have the $x$-property when $x$ is
a vertex with maximum degree in the graph.

\begin{figure}
  \begin{center}
  \includegraphics{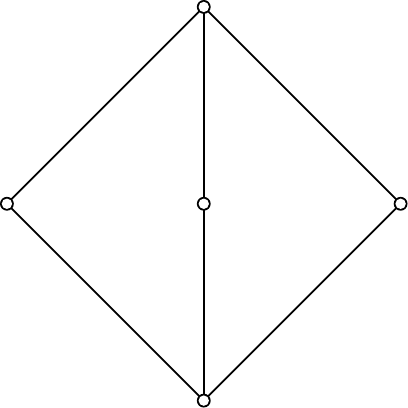}\rule{3em}{0ex}\includegraphics{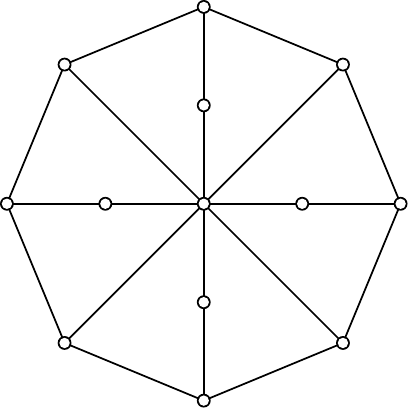}
  \end{center}
  \caption{\label{fig}A series-parallel graph and an ISK4-free graph}
\end{figure}

A \emph{bad triple} is a triple $(G, x, y)$ such that the following hold:
\begin{itemize}
\item $G$ is a graph and $x$ and $y$ are vertices of $G$.
\item $G$ does not have the $(x, y)$-property. 
\item If $x\neq y$, then $G$ has the $x$-property and $G$ has the
  $y$-property.
\end{itemize}

It is clear that for every graph $G$ and all vertices $x, y \in
V(G)$ such that $x=y$ or $xy\in E(G)$, one of the following cases
holds:
\begin{itemize}
\item $G$ has the $(x, y)$-property;
\item $(G, x, x)$ is a bad triple;
\item $(G, y, y)$ is a bad triple;
\item $(G, x, y)$ is a bad triple. 
\end{itemize}

Indeed, if $G$ does not have the $x$-property, then $(G, x, x)$ is a
bad triple.  Similarly, if $G$ does not have the $y$-property, then
$(G, y, y)$ is a bad triple.  So, we may assume that $G$ has the
$x$-property and the $y$-property.  If $G$ has the $(x, y)$-property,
then we are done, and otherwise, all the requirements in the
definition of bad triples are fulfilled.  It follows that a structural
description of bad triples really discribes all triples $(G, x, y)$
such that $G$ does not have the $(x, y)$-property.  Such a description
is given in the next lemma for triangle-free 2-connected
series-parallel graphs with no clique cutset.  Note that in the next
lemma, we do not require the girth of the graph to be at least~5.

\begin{lemma}
  \label{l:desc}
  Let $(G, x, y)$ be a bad triple, and suppose that $G$ is
  triangle-free, 2-connected, series-parallel, has least 5 vertices,
  and has no clique cutset.  Then, $G$ can be constructed as follows
  (and conversely, all graphs constructed as follows are
  triangle-free, 2-connected, series-parallel, have least 5 vertices,
  and have no clique cutset). 
  \begin{itemize}
  \item If $x\neq y$, then build two non-empty trees $T_x$ and $T_y$, not
    containing $x, y$, and consider the tree $T$ obtained by gluing
    $T_x$ and $T_y$ along some vertex $v$ (so $V(T_x) \cap V(T_y) =
    \{v\}$).
  \item If $x=y$ build a non-empty tree $T$, and set $T_x = T_y = T$.
  \item Add vertices of degree 2, each of them either adjacent to $x$ and
    to some vertex in $T_x$, or to $y$ and some vertex in $T_y$, in
    such a way that the following conditions are satisfied: 
    \begin{enumerate}
    \item $|N(x) \sm \{y\}|, |N(y) \sm \{x\}| \geq 1$;  
    \item every vertex of $T$ that has degree 2 in $T$ has at least
      one neighbor in $(N[x] \cup N[y]) \sm \{x, y\}$;
    \item every vertex of $T$ that has degree 1 in $T$ (so, a leaf of
      $T$) has at least two neighbors in $(N[x] \cup N[y])  \sm \{x, y\}$;
    \item every vertex of $T$ that has degree 0 in $T$ (this happens
      only when $|V(T)| = |V(T_x)| = |V(T_y)| = 1$) has at least three
      neighbors in $(N[x] \cup N[y])  \sm \{x, y\}$.
    \end{enumerate}
  \end{itemize}
\end{lemma}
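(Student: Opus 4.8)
The plan is to treat the main case $x\neq y$ first (the case $x=y$ being similar and easier), and to reconstruct the advertised description from the combinatorial data attached to $x$ and $y$. Since $(G,x,y)$ is a bad triple we have $xy\in E(G)$, so I partition $V(G)$ into $\{x,y\}$, $A=N(x)\setminus\{y\}$, $B=N(y)\setminus\{x\}$ and $R=V(G)\setminus(N[x]\cup N[y])$. Triangle-freeness makes $A$ and $B$ stable and disjoint, $2$-connectivity forces minimum degree at least $2$, and the failure of the $(x,y)$-property forces every vertex of $R$ to have degree at least $3$ in $G$ (a degree-$2$ vertex of $R$ would witness the property). Finally, since $\{x,y\}$ is a clique and $G$ has no clique cutset, $G':=G\setminus\{x,y\}$ is connected. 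The target is to show that $G'$ is a tree whose leaves are exactly the vertices of $A\cup B$, each attached to a single vertex of $R$, so that $T:=G[R]$ is a tree.

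First I would establish that there is no edge between $A$ and $B$ and that each vertex of $A$ (resp.\ $B$) has degree exactly $2$ with its second neighbour in $R$. An $A$–$B$ edge, or an attachment vertex of higher degree, either lets $\{x,y\}$ detach a small piece of $G'$ (contradicting the connectedness of $G'$), or combines with a path furnished by the $2$-connectivity of $G$ to yield a subdivision of $K_4$ or of $K_{3,3}$; both are impossible, since a series-parallel graph contains no subdivision of $K_4$ as a subgraph, and $K_{3,3}$ itself contains such a subdivision. Once $A\cup B$ are degree-$2$ pendant vertices, each of them has degree $1$ in $G'$ and therefore lies on no cycle of $G'$; hence every cycle of $G'$ lies inside $T=G[R]$, where all vertices have degree at least $3$. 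A cycle $Z$ in $T$ would be joined to $x$ and to $y$ through $A$ and $B$, and, routing two disjoint connections from $x$ and two from $y$ to $Z$ in interleaved cyclic positions (via Menger's theorem), would produce a subdivision of $K_{3,3}$ — again impossible. This proves that $T$, and hence $G'$, is a tree.

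With $T$ a tree I label each of its vertices with $x$ if it has a neighbour in $A$ and with $y$ if it has a neighbour in $B$, and apply Lemma~\ref{l:tree}. The first outcome, namely two vertex-disjoint paths of $T$ each joining an $x$-labelled vertex to a $y$-labelled vertex, is impossible: closing off each path through its pendant neighbours in $A$ and $B$ to $x$ and $y$, and using the edge $xy$, gives four branch vertices — $x$, $y$, and the two vertices at which a shortest $T$-path between the two paths meets them — joined by six internally disjoint paths, i.e.\ a subdivision of $K_4$. Therefore the second outcome of Lemma~\ref{l:tree} holds, yielding a cut vertex $v$ and subtrees $T_x,T_y$ with $V(T_x)\cup V(T_y)=V(T)$ and $V(T_x)\cap V(T_y)=\{v\}$, where $T_x$ contains every vertex adjacent to $A$ and $T_y$ every vertex adjacent to $B$; consequently the vertices of $A$ attach to $T_x$ and those of $B$ to $T_y$. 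The requirement that every vertex of $R$ have degree at least $3$ translates precisely into conditions (2)–(4), according to whether its degree in $T$ is $2$, $1$ or $0$, while condition (1) simply records that $x$ and $y$ need a private neighbour so that $G$ is $2$-connected and $T_x,T_y$ are non-empty. When $x=y$ the same argument applies with $T_x=T_y=T$ and no cut vertex needed.

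For the converse one verifies directly that any graph assembled as prescribed is triangle-free, $2$-connected, series-parallel, has no clique cutset, has at least $5$ vertices, and is a bad triple, the degree prescriptions (2)–(4) guaranteeing exactly that its only degree-$2$ vertices lie in $N[x]\cup N[y]$; this part is routine. I expect the real obstacle to be the two subdivision arguments of the second paragraph — excluding $A$–$B$ edges and high-degree attachment vertices, and excluding cycles in $G[R]$ — because there the six (or nine) internally disjoint paths of the forbidden $K_4$ (or $K_{3,3}$) must be extracted from the $2$-connectivity of $G$ through Menger's theorem rather than read off directly. By contrast, once $T$ is known to be a tree, the split into $T_x$ and $T_y$ and the verification of conditions (1)–(4) follow almost immediately from Lemma~\ref{l:tree}.
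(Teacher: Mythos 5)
The second half of your argument (labelling the vertices of $T$, applying Lemma~\ref{l:tree}, ruling out the two-disjoint-paths outcome, and reading off conditions (1)--(4)) is essentially the paper's proof and is fine. The genuine gap is exactly where you yourself locate the ``real obstacle'': the structural claims of your second paragraph --- that every vertex of $A\cup B$ has degree $2$ with its second neighbour in $R$, that there is no $A$--$B$ edge, and that $G[R]$ is acyclic --- are not proved, only deferred to unspecified ``Menger plus forbidden subdivision'' arguments. This is not a presentational shortfall: those arguments \emph{cannot} be completed from the hypotheses you actually use, because nowhere do you invoke the third condition in the definition of a bad triple (that $G$ has the $x$-property and the $y$-property when $x\neq y$). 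Concretely, take $G=K_{2,3}$ with parts $\{x,t\}$ and $\{u,v,w\}$, and take $y=u$. This graph is triangle-free, $2$-connected, series-parallel, has $5$ vertices, has no clique cutset, and fails the $(x,y)$-property (vacuously, since $N[x]\cup N[y]=V(G)$) --- every hypothesis your second paragraph uses. Yet $A=\{v,w\}$, $B=\{t\}$, $R=\emptyset$: there is an $A$--$B$ edge $vt$, the second neighbour of $v$ lies in $B$ rather than $R$, and the lemma's conclusion fails (no non-empty tree $T$ exists). The only reason this is not a counterexample to the lemma is that $(K_{2,3},x,u)$ is \emph{not} a bad triple --- $K_{2,3}$ lacks the $x$-property --- so any correct proof must use that property somewhere, and yours never does.

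For comparison, the paper's route through this part is quite different from what you sketch. It first shows $G$ is \emph{chordless} (a chord plus the no-clique-cutset hypothesis yields a $K_4$-subdivision subgraph), then invokes Theorem~\ref{th:Cchordless} to conclude that every cycle of $G$ contains two vertices of degree~$2$; since all vertices outside $N[x]\cup N[y]$ have degree at least~$3$, acyclicity of $G\setminus(N[x]\cup N[y])$ is then immediate --- no interleaved $K_{3,3}$-subdivision construction is needed. The delicate step is showing that all vertices of $(N(x)\cup N(y))\setminus\{x,y\}$ have degree~$2$: there the paper takes a cycle $C$ through a hypothetical degree-$3$ neighbour $x'$ of $x$, locates two degree-$2$ vertices on $C$, forces them both to be neighbours of $y$, and only then uses the $y$-property to produce a degree-$2$ non-neighbour $x''$ of $y$ (hence a neighbour of $x$) whose connection to $C$ creates a $K_4$-subdivision or a chord. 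That is the step your proposal is missing, and it is precisely the step that distinguishes bad triples from arbitrary triples failing the $(x,y)$-property.
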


\begin{proof}
  Let $(G, x, y)$ be a bad triple, and suppose that $G$ is
  triangle-free, 2-connected, series-parallel, has least 5 vertices,
  and has no clique cutset.
  
  \begin{claim}
    \label{c:chordless}
    $G$ is chordless.
  \end{claim}

  \begin{proofclaim}
    Suppose that some cycle $C$ of $G$ has a chord $ab$.  Thereofore,
    $C$ is formed of two $ab$-paths $R$ and $R'$, both of length at
    least~2.  Since $\{a, b\}$ is not a clique cutset of $G$, some
    path $S$ of $G$ is disjoint from $\{a, b\}$, has one end in $R$,
    the other one in $R'$, and is internally disjoint from $C$.
    Therefore, $C, ab$ and $S$ form a subdivision of $K_4$ (that is in
    $G$ as a subgraph), and this contradicts $G$ being
    series-parallel.    
  \end{proofclaim}

  \begin{claim}
    \label{c:Cdeg2}
    Every cycle of $G$ contains at least two
    vertices of degree~2.
  \end{claim}

  \begin{proofclaim}
    If $G$ is a cycle, our claim clearly holds.  Otherwise, it follows
    directly from~(\ref{c:chordless}) and Theorem~\ref{th:Cchordless}.
  \end{proofclaim}

  \begin{claim}
    \label{c:deg2}
    All vertices in $N(x) \cup N(y)$ distinct from $x$ and $y$ have
    degree 2.  
  \end{claim}

  \begin{proofclaim}
    Suppose w.l.o.g.\ that a neighbor of $x$, $x'\neq y$ has degree at
    least~3.  Let $u$ and $v$ be two neighbors of $x'$ distinct from
    $x$.  Because $G$ is triangle-free, $u$ and $v$ are non-adjacent
    to $x$.  Since $G$ is 2-connected, there is a path $P = u\dots v$
    in $G \sm x'$.  So, $C = x' u P v x'$ is a cycle of $G$.  Note
    that $C$ does not go through $x$, for otherwise $xx'$ would be a
    chord a $C$, a contradiction
    to~(\ref{c:chordless}).

    By~(\ref{c:Cdeg2}), $C$ contains two vertices $a$ and $b$ of
    degree~2.  Vertices $a$ and $b$ must be adjacent to $x$ or $y$
    because they have degree~2 and $G$ does not have the $(x,
    y)$-property.  If $a$ and $b$ are both adjacent to $x$ (in
    particular, when $x=y$), then $C$ must go through $x$ (because $a$
    and $b$ have degree~2), a contradiction.

    Hence, at least one of $a$ or $b$ is adjacent to $y$, not to $x$,
    and in particular, $x\neq y$.  If $C$ does not go through $y$,
    then $C$, $x$ and $y$ form a subdivision of $K_4$, contradiciting
    $G$ being series-parallel, so $C$ goes through $y$.  If $x$ is
    adjacent to $a$ or $b$, then again $C$, $x$ and $y$ form a
    subdivision of $K_4$.  So, $a$ and $b$ are both adjacent to $y$,
    not to $x$.  Since $G$ has the $y$-property (from the definition
    of bad triples), there is a non-neighbor $x''$ of $y$ that has
    degree~2, and since $G$ does not have the $(x, y)$-property, $x''$
    is a neighbor of $x$, distinct from $x'$ because $x'$ has
    degree~3.  Since $G$ is 2-connected, there exists a path $Q =
    x''\dots c$ from $x''$ to $C$ in $G \sm x$.  If $c \neq x', y$,
    then $x$, $Q$ and $C$ form a subdivision of $K_4$, a contradiction
    to $G$ being series-parallel.  Otherwise, $c \in \{x', c\}$ and
    $xc$ is a chord of some cycle of $G$, a contradiction
    to~(\ref{c:chordless}).
   \end{proofclaim}

  \begin{claim}
    \label{c:nonEmpty}
    ~$N[x]\cup N[y] \subsetneq V(G)$.
  \end{claim}

  \begin{proofclaim}
    Otherwise, $V(G) = N[x]\cup N[y]$  and all vertices of $G$ are
    adjacent to $x$ or $y$.  If $x=y$, then $V(G) = N[x]$, a
    contradiction since $G$ is triangle-free and 2-connected.  So,
    $x\neq y$.  

    Let $x'\neq y$ be a neighbor of $x$. By~(\ref{c:deg2}), $x'$ has degree~2.
    Its other neighbor $y'$ is non-adjacent to
    $x$ (because $G$ is triangle-free), so it must be adjacent to $y$,
    and by~(\ref{c:deg2}), $y'$ has degree~2.  Since $G$ contains at least
    five vertices, there must be other vertices, so w.l.o.g. another
    neighor $x''$ of $x$.  Again, $x''$ has degree~2, a neighbor $y''$
    (distinct from $y'$ because $y'$ has degree~2), and $y''$ is
    adjacent yo $y$.  Now, $xx'y'yy''x''x$ is a cycle of $G$ that has
    a chord (namely $xy$), a contradiction to~(\ref{c:chordless}).
  \end{proofclaim}

  \begin{claim}
    \label{c:stable}
    ~$(N[x]\cup N[y])  \sm \{x, y\}$ is a stable set.
  \end{claim}

  \begin{proofclaim}
    Otherwise, let $u$ and $v$ be two adjacent vertices in  $(N(x)\cup N(y))
    \sm \{x, y\}$.   By~(\ref{c:deg2}) they have degree~2, so $\{x,
    y\}$ is a clique cutset that separates them from $V(G) \sm
    (N[x]\cup N[y])$ that is non-empty by~(\ref{c:nonEmpty}). 
  \end{proofclaim}

  \begin{claim}
    \label{c:tree}
    ~$G \sm (N[x] \cup N[y])$ is a tree $T$.
  \end{claim}

  \begin{proofclaim}
    Since $(G, x, y)$ is a bad triple, $V(G) \sm (N[x] \cup N[y])$
    contains only vertices of degree at least~3 (in $G$), so
    by~(\ref{c:Cdeg2}), it cannot contain a cycle.  Also $G \sm
    (N[x] \cup N[y])$ is connected, because otherwise let $A$ and $B$
    be two components of $V(G) \sm (N[x] \cup N[y])$.  These two
    components must attach to disjoint sets of neighbors of
    $x$ and $y$, because all neighbors of $x$ and $y$ (except $x$ and
    $y$) have degree~2 by~(\ref{c:deg2}).  It follows that $\{x, y\}$
    is a clique cutset, a contradiction.
  \end{proofclaim}

  Let us now give label $x$ (resp.\ $y$) to all vertices of $T$ that
  have a neighbor adjacent to $x$ (resp.\ $y$).  Let us apply
  Lemma~\ref{l:tree} to $T$.

  If the first outcome holds (so in $T$, there exist two
  vertex-disjoint paths $P$ and $Q$, and each of them is from a vertex
  with label $x$ to a vertex with label $y$), then we reach a
  contradiction as follows.  Let $P = p_x \dots p_y$ and $Q = q_x
  \dots q_y$ where $p_x, q_x$ have label $x$ and $p_y, q_y$ have label
  $y$.  So, let $p'_x$ be a neighbor of $p_x$ that is adjacent to $x$,
  and let $p'_y$, $q'_x$ and $q'_y$ be defined similarly.  Observe
  that $p'_x$, $p'_y$, $q'_x$ and $q'_y$ are distinct, because all
  neighbors of $x$ and $y$ distinct from $x$ and $y$ have degree~2
  by~(\ref{c:deg2}).  Now the cycle $x p'_x p_x P p_y p'_y y q'_y q_y
  Q q_x q'_x x$ has a chord (namely $xy$), a contradiction
  to~(\ref{c:chordless}).
  
  Hence, the second outcome holds, so we keep the notation $v$, $T_x$
  and $T_y$ from Lemma~\ref{l:tree}.  Note that $T$ can be obtained by
  gluing $T_x$ and $T_y$ along~$v$.  It follows that $G$ can be
  constructed as we claim it should be. By~(\ref{c:tree}) and~(\ref{c:nonEmpty}), we really
  need to consider a non-empty tree.
  By~(\ref{c:deg2}), we have to add vertices of degree~2, and
  by~(\ref{c:stable}), they all have one neighbor in $\{x, y\}$ and
  the other one in $T$.  The last three conditions are here to ensure
  that the vertices of $T$ really all have degree at least 3.

  We do not prove the converse statement (every graph constructed as
  above is a bad triple, is triangle-free, 2-connected, series
  parallel, has at least 5 vertices and has no clique cutset).  It is
  easy to check and we do not need it in what follows. 
\end{proof}

\begin{lemma}\label{xy}
  Let $G$ be a 2-connected series-parallel graph, of girth at least~5 
  that has no clique cutset.  If $x$ and $y$ are vertices of $G$ such
  that $x=y$ or $xy\in E(G)$, then $G$ has the $(x, y)$-property.  
\end{lemma}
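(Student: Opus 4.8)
The plan is to argue by contradiction, reducing to the structural description of Lemma~\ref{l:desc} and then exploiting the absence of $4$-cycles. Suppose $G$ does not have the $(x,y)$-property. By the case analysis preceding Lemma~\ref{l:desc}, at least one of $(G,x,x)$, $(G,y,y)$, $(G,x,y)$ is a bad triple; call it $(G,a,b)$, so that $a=b$ or $ab\in E(G)$. Since $G$ has girth at least~5 it is triangle-free, and since it is 2-connected it contains a cycle, necessarily of length at least~5, so $|V(G)|\geq 5$. Thus $G$ satisfies all the hypotheses of Lemma~\ref{l:desc}, and I would apply that lemma to the bad triple $(G,a,b)$ to obtain the tree $T=G\sm(N[a]\cup N[b])$, its decomposition into subtrees $T_a$ and $T_b$ glued along a vertex $v$, and the degree-$2$ \emph{connectors}, each adjacent to one of $a,b$ and to a single vertex of the corresponding subtree.

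The key observation is that this structure always forces a $4$-cycle, contradicting the girth hypothesis. Concretely, whenever two distinct connectors $c_1,c_2$ are both adjacent to the same endpoint, say $a$, and to a common vertex $t$ of $T$, the four distinct vertices $a,c_1,t,c_2$ induce a cycle $ac_1tc_2a$ of length~$4$ (they are distinct because $t\in T$ is non-adjacent to $a$). So it suffices to produce such a configuration. If $a=b$, then every connector is adjacent to $a$, and any vertex of $T$ carrying at least two connectors --- a leaf by condition~(3), or the unique vertex of $T$ by condition~(4) --- immediately yields the forbidden $4$-cycle.

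In the remaining case $a\neq b$ with $ab\in E(G)$, I would split on $|V(T)|$. If $|V(T)|=1$, its single vertex has at least three connectors by condition~(4); since each is adjacent to $a$ or to $b$, two of them share an endpoint by the pigeonhole principle, giving the $4$-cycle. If $|V(T)|\geq 2$, then $T$ has at least two leaves, hence a leaf $\ell\neq v$; since $V(T_a)\cap V(T_b)=\{v\}$, this $\ell$ lies in exactly one of $T_a,T_b$, say $T_a\sm\{v\}$, so every connector attached to $\ell$ is of the type adjacent to $a$ (a connector adjacent to $b$ would have to attach to a vertex of $T_b$). By condition~(3), $\ell$ carries at least two such connectors, and once more we obtain a $4$-cycle. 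In every case the girth-at-least-$5$ hypothesis is violated, which completes the contradiction.

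The content of the argument is the single remark that two connectors sharing an endpoint produce a $C_4$, so the main obstacle is purely organizational: one must check that the degree conditions~(2)--(4) of Lemma~\ref{l:desc}, together with the tree structure of $T$, always deliver two connectors with a common endpoint. The only delicate point is the case $a\neq b$, where one must use that a non-gluing leaf of $T$ cannot receive connectors from both $a$ and $b$, so its two required connectors necessarily coincide in their endpoint within $\{a,b\}$.
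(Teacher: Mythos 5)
Your proposal is correct and follows essentially the same route as the paper's own proof: assume failure, extract a bad triple, apply Lemma~\ref{l:desc}, and then use the leaf of $T$ (or its unique vertex, via pigeonhole) to find two degree-2 vertices sharing both an endpoint in $\{x,y\}$ and a tree vertex, producing a $4$-cycle that contradicts the girth hypothesis. Your write-up is in fact slightly more careful than the paper's (explicitly verifying $|V(G)|\geq 5$ and that a non-gluing leaf receives connectors of only one type), but the argument is the same.
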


\begin{proof}
  Otherwise, one of $(G, x, y)$, $(G, x, x)$ or $G(y, y)$ is a bad
  triple.   We apply Lemma~\ref{l:desc}, and we consider the tree $T$
  defined in the outcome.  

  If $|V(T)| = 1$, then the unique vertex
  of $T$ has at least three neighbors in $(N[x]\cup N[y]) \sm \{x,
  y\}$, and at least two of them are neighbors of $x$, or are
  neighbors of $y$.  Therefore, $G$ contains a 4-cycle, a
  contradiction to our assumption on the girth.  

  If $|V(T)|  > 1$, then we consider a leaf of $T$ that is distinct
  from the vertex $v$ defined in the outcome of Lemma~\ref{l:desc}.
  This leaf has at least two neighbors that are both neighbors of $x$, or
  that are both neighbors of $y$.  Again, there exists a 4-cycle in
  $G$.  
\end{proof}

\begin{lemma}
  \label{l:2-conn}
  Let $G$ be a 2-connected ISK4-free graph of girth at least~5.  Then
  for every pair $\{x, y\}$ of vertices of $G$ such that $x=y$ or $xy
  \in E(G)$, $G$ has the $(x, y)$-property.
\end{lemma}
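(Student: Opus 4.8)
The plan is to prove Lemma~\ref{l:2-conn} by induction on $|V(G)|$, reducing the general $2$-connected ISK4-free girth-$5$ case to the series-parallel case already handled by Lemma~\ref{xy}. First I would deal with small cases: if $|V(G)| \leq 4$, the girth-$5$ assumption forces $G$ to be a cycle of length at least~$5$ (a $2$-connected graph on at most~$4$ vertices of girth~$\geq 5$ cannot exist unless it is such a cycle, but a cycle needs $\geq 5$ vertices), so in fact I should treat $|V(G)| \leq 4$ as vacuous or handle the base of the induction at small cycles directly, where every vertex has degree~$2$ and the $(x,y)$-property is immediate. More carefully, if $G$ itself is a cycle then all vertices have degree~$2$ and, since $|V(G)|\geq 5$ while $N[x]\cup N[y]$ covers at most $4$ vertices (as $x=y$ or $xy\in E$), there is a degree-$2$ vertex outside $N[x]\cup N[y]$, giving the property at once.

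Next I would invoke the structural dichotomy available to us. Since $G$ is ISK4-free of girth at least~$5$, Corollary~\ref{cmain} tells us that either $G$ is series-parallel or $G$ has a star cutset. In the series-parallel subcase, I can almost apply Lemma~\ref{xy} directly; the one hypothesis I still need is that $G$ has no clique cutset. So I would argue that a $2$-connected series-parallel graph of girth at least~$5$ has no clique cutset: a clique cutset in a triangle-free graph has size at most~$2$, but a clique cutset of size~$2$ is an edge whose two endpoints separate $G$, and a clique cutset of size~$1$ is a cut-vertex, both impossible in a $2$-connected graph with no such separating edge — more precisely, girth~$\geq 5$ rules out an edge being a cutset because the two sides must reconnect through paths of length~$\geq 2$, but $2$-connectivity already forbids cut-vertices and the triangle-free/girth condition forbids the edge-cutset from creating short cycles; in any case one checks directly that a $2$-connected graph of girth $\geq 5$ has no clique cutset. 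Then Lemma~\ref{xy} applies and gives the $(x,y)$-property.

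The remaining, and genuinely substantive, case is when $G$ has a star cutset but is not series-parallel. Here the plan is to use the star cutset to decompose $G$, build a smaller block to which the induction hypothesis applies, and transfer the degree-$2$ vertex back to $G$. The delicate point is that a degree-$2$ vertex produced in a block must genuinely have degree~$2$ in $G$ and must lie outside $N_G[x]\cup N_G[y]$; because the star cutset is centered at a single vertex $c$, vertices on the far side of the cutset from $\{x,y\}$ retain their $G$-neighborhoods, so a degree-$2$ vertex found deep inside such a component is safe. I would choose a component $C$ of $G\setminus S$ (where $S$ is the star cutset with center $c$) not meeting $N[x]\cup N[y]$, form the block $G[C\cup S]$ or a suitable $2$-connected augmentation of it, and locate the required degree-$2$ vertex there, carefully checking that adding marker/center structure does not destroy girth or create the forbidden adjacencies.

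The hardest part will be this last case: managing the bookkeeping of the star-cutset decomposition so that the block remains $2$-connected, ISK4-free, of girth at least~$5$, and smaller than $G$, while ensuring the degree-$2$ vertex the induction hands back is both a true degree-$2$ vertex of $G$ and outside $N_G[x]\cup N_G[y]$. The interplay between the location of $\{x,y\}$ relative to the cutset center $c$ and the side on which the degree-$2$ vertex is found is exactly where the $(x,y)$-property's robustness under decomposition (advertised just before the statement) must be exploited, and I expect the bulk of the argument — and the main risk of a subtle gap — to lie in verifying that the marker construction preserves girth~$\geq 5$ and does not accidentally place the target vertex into the excluded neighborhood.
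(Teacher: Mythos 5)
Your series-parallel branch rests on a false claim. You assert that a $2$-connected series-parallel graph of girth at least~$5$ has no clique cutset, so that Lemma~\ref{xy} applies directly. This is not true: let $G$ be the theta graph formed by two $5$-cycles sharing an edge $ab$ (the edge $ab$ together with two internally disjoint $ab$-paths of length four). This graph is $2$-connected, series-parallel, has girth~$5$, and $\{a,b\}$ is a clique cutset. Two-connectivity excludes cutsets of size one, but says nothing about separating edges, and girth does not help. This is why the paper splits the cases differently than you do: it splits on whether $G$ has a \emph{star} cutset. If $G$ has no star cutset, then it has no clique cutset either (in a triangle-free graph a clique cutset has at most two vertices, and a cutset consisting of a single vertex or an edge is a star cutset), and it is series-parallel by Corollary~\ref{cmain}; only then is Lemma~\ref{xy} invoked. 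A series-parallel graph that does have a clique cutset, like the theta graph above, must be handled by the decomposition branch, not by Lemma~\ref{xy}.

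The star-cutset branch, which you yourself flag as the substantive one, is not carried out, and the sketch you give would fail: a component of $G \setminus S$ disjoint from $N[x] \cup N[y]$ need not exist, since nothing prevents $x$ or $y$ from lying in the cutset $S$ itself, in which case it may have neighbors in every component. The paper resolves this with three ingredients missing from your plan: (i) the star cutset $C$, centered at $c$, is chosen inclusion-wise \emph{minimal}; this minimality implies that $G[C \cup Y]$ is already $2$-connected, where $X$ is the component of $G\setminus C$ with $x,y \in C\cup X$ and $Y$ is any other component --- so no marker vertices are needed at all, and girth and ISK4-freeness are preserved trivially because the block is an induced subgraph; (ii) the induction hypothesis is applied in $G[C \cup Y]$ to the pair $\{x, y, c\} \setminus X$, i.e., the center $c$ is itself drafted into the excluded pair, together with whichever of $x,y$ lies in $C$; triangle-freeness is what guarantees this set has at most two elements that are equal or adjacent (two distinct adjacent vertices of the star $C$ cannot both differ from $c$); (iii) since $C \subseteq N[c]$, the degree-$2$ vertex returned by the inductive call lies in $Y$, so its degree and its non-adjacency to $x$ and $y$ are the same in $G$ as in $G[C \cup Y]$. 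Ingredient (ii) in particular is exactly the ``robustness under decomposition'' of the $(x,y)$-property that you allude to but never implement; without it the inductive step does not close.
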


\begin{proof}
  We prove the statement by induction on $|V(G)|$.  If $G$ has no star
  cutset, then it has no clique cutset, and it is series parallel by
  Corollary~\ref{cmain} (all this happens in particular when $|V(G)|
  \leq 5$ which is the base case of our induction).  So, we have the
  result directly by Lemma~\ref{xy}.  Hence, we may assume that $G$
  has a star cutset $C$.  We suppose that $C$ is inclusionwise minimal
  among all possible star cutsets and is centered at $c$.  W.l.o.g.\ we suppose
  that $x$ and $y$ are both in $G[C \cup X]$ where $X$ is a component
  of $G\sm C$, and  we consider another component $Y$. 

  We claim that $G[C \cup Y]$ is 2-connected.  Indeed, suppose for a
  contradiction that $G[C \cup Y]$ has a cutvertex $v$.  Since $G$ is
  2-connected, $|C| \geq 2$.  By the minimality of $C$, every
  vertex of $C \sm \{c\}$ has a neighbor in $Y$ and if $|C| = 2$, then
  $c$ also has neighbors in $Y$.  It follows that $v\notin C$.  So,
  $v$ is in fact a cutvertex of $G$, a contradiction.  We proved  that
  $G[C \cup Y]$ is 2-connected. 

  We now apply the induction hypothesis to  $\{x, y, c\} \sm
  X$ in the graph $G[C \cup Y]$.  This gives a vertex in $G \sm (N[x]
  \cup N[y])$ that has degree~2 in $G$.
\end{proof}

\begin{theorem}
  \label{th:color}
  Every ISK4-free graph of girth at least~5
  contains a vertex of degree at most~2 and is 3-colorable.
\end{theorem}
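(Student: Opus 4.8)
The plan is to reduce the theorem to the results already established for connected and then $2$-connected graphs. First I would observe that colorability and the existence of a low-degree vertex are both properties that can be handled component by component, and then block by block, so the real content lies in the $2$-connected case, which is exactly where Lemma~\ref{l:2-conn} applies. So I would begin by dealing with the trivial reductions and isolate the $2$-connected situation as the heart of the argument.

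For the existence of a vertex of degree at most~$2$: if $G$ is not $2$-connected, then $G$ has a cut-vertex or is disconnected, and I would look at an endblock $B$ (a leaf in the block-cut tree), which shares at most one vertex $c$ with the rest of $G$. Every vertex of $B$ other than $c$ has the same degree in $G$ as in $B$. If $B$ is a single edge or a single vertex we get a vertex of degree at most~$1$ immediately; otherwise $B$ is $2$-connected, and applying Lemma~\ref{l:2-conn} to $B$ with $x=y=c$ (using that $c=c$ so the $c$-property is the relevant one) yields a vertex of $B$ in $V(B)\sm N[c]$ of degree~$2$ in $B$, hence of degree~$2$ in $G$. If $G$ itself is $2$-connected I apply Lemma~\ref{l:2-conn} directly with any $x=y$. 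This disposes of the degree-$2$ claim.

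For $3$-colorability I would argue by induction on $|V(G)|$. Using the degree-$2$ conclusion just obtained, $G$ contains a vertex $v$ of degree at most~$2$. Since the class of ISK4-free graphs of girth at least~$5$ is closed under taking induced subgraphs, $G\sm\{v\}$ is again such a graph, so by the induction hypothesis it admits a proper $3$-coloring. As $v$ has at most two neighbors, at most two colors are forbidden for $v$ among the three available, so I extend the coloring by assigning $v$ a color avoided by its neighbors. The base case $|V(G)|\le 1$ is immediate.

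The main subtlety, which is already absorbed into Lemma~\ref{l:2-conn}, is the passage through star cutsets in the inductive proof of the $(x,y)$-property; by the time I invoke that lemma here, the hard structural work is done. The only genuine care needed at this level is the block decomposition for the degree-$2$ statement: I must make sure the endblock I pick is either small (giving degree $\le 1$) or genuinely $2$-connected so that Lemma~\ref{l:2-conn} applies, and that the vertex I extract from $V(B)\sm N[c]$ retains degree~$2$ in the whole graph $G$, which holds precisely because only the shared vertex $c$ can gain neighbors outside~$B$. With that observation the two halves of the theorem follow cleanly.
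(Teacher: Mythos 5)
Your proof is correct, and it rests on the same two pillars as the paper's: Lemma~\ref{l:2-conn} for the 2-connected case, and the greedy induction (delete a vertex of degree at most~2, 3-color the rest by induction, extend) for colorability, which the paper leaves implicit in its opening sentence. The difference is in how the non-2-connected case of the degree statement is handled. The paper never invokes the block decomposition: it instead proves the strengthened statement that every such graph on at least two vertices has at least \emph{two} vertices of degree at most~2, by induction on $|V(G)|$, splitting at a cutvertex $v$ into $G[X\cup\{v\}]$ and $G[Y\cup\{v\}]$; the ``two vertices'' guarantee is exactly what ensures that each side contributes a degree-2 vertex distinct from $v$, whose degree is then unchanged in $G$. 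You instead pass to an endblock $B$ of the block-cut tree and apply Lemma~\ref{l:2-conn} with $x=y=c$, the unique cutvertex of $G$ lying in $B$; since the vertex produced lies outside $N[c]$, it is not $c$ and so keeps its degree in $G$. The two devices address the same difficulty (the low-degree vertex must not be the attachment vertex of the piece being analyzed): yours avoids the strengthened induction by using block structure, the paper's avoids block-decomposition machinery at the cost of proving a slightly stronger statement. Your supporting observations are the right ones and are valid: blocks are induced subgraphs, so ISK4-freeness and girth at least~5 are inherited; an endblock is a single vertex, a single edge, or 2-connected; and every vertex of an endblock other than $c$ has all its neighbors inside the endblock.
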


\begin{proof}
  It is enough to prove that every ISK4-free graph of girth at least~5
  contains a vertex of degree at most~2.  For the sake of induction,
  we prove by induction on $|V(G)|$ a slightly stronger statement:
  every ISK4-free graph of girth at least~5 on at least two vertices
  contains at least two vertices of degree at most~2.  If $|V(G)| =
  2$, this is clearly true.  If $G$ is 2-connected, it follows from
  Lemma~\ref{l:2-conn} applied twice (once to find a vertex $x$ of
  degree~2, and another time to find the second one in $G \sm N[x]$).
  So, we may assume that $G$ is not 2-connected and has at least~3
  vertices, so it has a cutvertex~$v$.  The result follows from the
  induction hypothesis applied to $G[X \cup \{v\}]$ and to $G[Y \cup
  \{v\}]$ where $X$ and $Y$ are connected components of $G \sm \{v\}$.
\end{proof}


\begin{thebibliography}{99}
\bibitem{kuhnOsthus:04}
D.~K{\"u}hn and D.~Osthus.
\newblock Induced subdivisions in {$K_{s,s}$}-free graphs of large average
  degree.
\newblock {\em Combinatorica}, 24(2):287--304, 2004.


\bibitem{nicolas:isk4}
B.~L{\'e}v{\^e}que, F.~Maffray, and N.~Trotignon.
\newblock On graphs with no induced subdivision of {$K_4$}.
\newblock {\em Journal of Combinatorial Theory, Series B}, 102(4):924--947,
  2012.

\bibitem{aboulkerRTV:propeller}
P.~Aboulker, M.~Radovanovi{\'c}, N.~Trotignon, and K.~Vu{\v s}kovi{\'c}.
\newblock Graphs that do not contain a cycle with a node that has at least two
  neighbors on it.
\newblock {\em SIAM Journal on Discrete Mathematics}, 26(4):1510--1531, 2012.
\end{thebibliography}
\end{document}